\newtheorem{theorem}{Theorem}
\newtheorem{corollary}[theorem]{Corollary}
\newtheorem{lemma}[theorem]{Lemma}
\newtheorem{proposition}[theorem]{Proposition}
\newtheorem{problem}{Problem}
\newtheorem*{claim*}{Claim}
\newcommand\NN{{\mathbb N}}
\DeclareTextCompositeCommand{\v}{OT1}{l}{l\nobreak\hspace{-.1em}'}
\DeclareTextCompositeCommand{\v}{OT1}{t}{t\nobreak\hspace{-.1em}'\nobreak\hspace{-.15em}}
\begin{document}
\title{Hypergraphs with minimum positive uniform Tur\'an density\thanks{This work has received funding from the European Research Council (ERC) under the European Union's Horizon 2020 research and innovation programme (grant agreement No 648509). This publication reflects only its authors' view; the European Research Council Executive Agency is not responsible for any use that may be made of the information it contains. The authors were also supported by the MUNI Award in Science and Humanities of the Grant Agency of Masaryk University.\newline An extended abstract containing the results presented in this paper has appeared in the proceedings of EuroComb'21.}}

\author{Frederik Garbe\thanks{Faculty of Informatics, Masaryk University, Botanick\'a 68A, 602 00 Brno, Czech Republic. E-mail: {\tt \{garbe,dkral,lamaison\}@fi.muni.cz}.}\and
        \newcounter{lth}
	\setcounter{lth}{2}
        Daniel Kr{\'a}\v{l}$^\fnsymbol{lth}$\and
	Ander Lamaison$^\fnsymbol{lth}$}
\date{} 
\maketitle

\begin{abstract}
Reiher, R\"odl and Schacht showed that
the uniform Tur\'an density of every $3$-uniform hypergraph is either $0$ or at least $1/27$, and
asked whether there exist $3$-uniform hypergraphs with uniform Tur\'an density equal or arbitrarily close to $1/27$.
We construct $3$-uniform hypergraphs with uniform Tur\'an density equal to $1/27$.
\end{abstract}

\section{Introduction}
\label{sec:intro}

Determining the minimum density of a (large) combinatorial structure required to contain a given (small) substructure
is a classical extremal combinatorics problem,
which can be traced to the work of Mantel~\cite{Man07} and Tur\'an~\cite{Tur41} in the first half of the 20th century.
The \emph{Tur\'an density} of a $k$-uniform hypergraph $H$,
which is denoted by $\pi(H)$, is the infimum over all $d$ such that
every sufficiently large host $k$-uniform hypergraph with edge density at least $d$ contains $H$ as a subhypergraph.
It can be shown~\cite{KatNS64} that
the Tur\'an density of $H$ is equal to the limit of the maximum density of a $k$-uniform $n$-vertex $H$-free hypergraph
($n$ tends to infinity);
in particular, Katona, Nemetz and Simonovits~\cite{KatNS64} showed that
this sequence of maximum densities is non-increasing and so the limit always exists.

The Tur\'an density of a complete graph $K_r$ of order $r$ is equal to $\frac{r-2}{r-1}$ as determined by Tur\'an~\cite{Tur41} himself.
Erd\H os and Stone~\cite{ErdS46} showed that
the Tur\'an density of any $r$-chromatic graph $H$ is equal to $\frac{r-2}{r-1}$, also see~\cite{ErdS66}.
The situation is more complex already for $3$-uniform hypergraphs, which we will call $3$-graphs for simplicity, compared to graphs (which are $2$-uniform hypergraphs).
In particular,
determining the Tur\'an density of the complete $4$-vertex $3$-graph $K_4^{(3)}$ is a major open problem, and
likewise determining the Tur\'an density of $K_4^{(3)-}$, defined as $K_4^{(3)}$ with an edge removed,
is a challenging open problem~\cite{BabT11,FraF84,Raz10}
despite some recent progress obtained using the flag algebra method of Razborov~\cite{Raz07};
also see the survey~\cite{Kee11} for further details.

It is well-known that
$H$-free graphs with density close to the Tur\'an density $\pi(H)$ are close to $(r-1)$-partite complete graphs~\cite{Fur15,Sim66},
i.e., the edges in such graphs are distributed in a highly non-uniform way.
The same applies to conjectured extremal constructions in the setting of $3$-graphs~\cite{FraF84}.
In this paper, we study the notion of uniform Tur\'an density of hypergraphs,
which requires the edges in the host hypergraph to be distributed uniformly.
This notion was suggested by Erd\H os and S\'os~\cite{ErdS82,Erd90} in the 1980s and
there is a large amount of recent progress in relation to this notion and to some of its variants~\cite{GleKV16,ReiRS16,ReiRS18a,ReiRS18b,ReiRS18c,ReiRS18}, see also the survey~\cite{Rei20}.
For example, Glebov, Volec and the second author~\cite{GleKV16} and Reiher, R\"odl and Schacht~\cite{ReiRS18a}
answered a question raised by Erd\H os and S\'os by showing that the uniform Tur\'an density of $K_4^{(3)-}$ is equal to $1/4$.

The following result of Reiher, R\"odl and Schacht~\cite{ReiRS18} is the starting point of our work:
\emph{the uniform Tur\'an density of every $3$-graph is either zero or at least $1/27$}.
Reiher et al.~\cite{ReiRS18} asked
whether there exist $3$-graphs with uniform Tur\'an density equal or arbitrarily close to $1/27$.
We answer this question in affirmative
by giving a sufficient condition for a $3$-graph to have uniform Tur\'an density equal to $1/27$ and
finding examples of $3$-graphs satisfying this condition.

We next introduce the notation needed to state our results precisely.
The \emph{$\varepsilon$-linear} density of an $n$-vertex hypergraph $H$
is the minimum density of an induced subhypergraph of $H$ with at least $\varepsilon n$ vertices.
The \emph{uniform Tur\'an density} of a hypergraph $H_0$ is the infimum over all $d$ such that
there exists $\varepsilon>0$ such that every sufficiently large hypergraph $H$ with $\varepsilon$-linear density $d$ contains $H_0$.
We also present an equivalent definition,
which is used by Reiher, R\"odl and Schacht~\cite{ReiRS16,ReiRS18a,ReiRS18b,ReiRS18c,ReiRS18}.
An $n$-vertex $k$-uniform hypergraph $H$ is \emph{$(d,\varepsilon)$-dense}
if every subset $W$ of its vertices induces at least $d\binom{|W|}{k}-\varepsilon n^k$ edges.
The uniform Tur\'an density of a hypergraph $H_0$ is the supremum over all $d$ such that
for every $\varepsilon>0$, there exist arbitrarily large $H_0$-free $(d,\varepsilon)$-dense hypergraphs.
It is easy to show that the two definitions are equivalent.

The notion of the uniform Tur\'an density is trivial for graphs as
the uniform Tur\'an density of every graph is equal to zero.
However, the situation is much more complex already for $3$-graphs.
As we have already mentioned,
the uniform Tur\'an density of $K_4^{(3)-}$ has been determined only recently~\cite{GleKV16,ReiRS18a}, and
the only other $3$-graphs with a positive uniform Tur\'an density that has been determined
are tight $3$-uniform cycles of length not divisible by three~\cite{BucCKMM} - note that for tight cycles divisible by $3$ the uniform Turán density is equal to $0$.
In particular,
determining the uniform Tur\'an density of $K_4^{(3)}$ is a challenging open problem
though it is believed that the 35-year-old construction of R\"odl~\cite{Rod86} showing that
the uniform Tur\'an density of $K_4^{(3)}$ is at least $1/2$ is optimal~\cite{Rei20}.

\begin{figure}
\begin{center}
\epsfbox{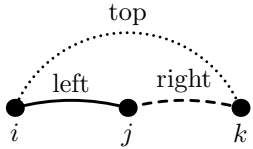}
\end{center}
\caption{Illustration of left, right and top pairs in an edge of a $3$-graph with ordered vertex set $i<j<k$.
	 The left pair is drawn solid, the right pair dashed and the top pair dotted
	 following the convention used later in Figure~\ref{fig:example9b}.}
\label{fig:LRT}
\end{figure}

Reiher, R\"odl and Schacht~\cite{ReiRS18} gave a characterization of $3$-graphs
with uniform Tur\'an density equal to zero,
which we now present.
Let $H$ be a $3$-graph with $n$ vertices.
We say that an ordering $v_1,\ldots,v_n$ of the vertices of $H$ is \emph{vanishing}
if the set of pairs $(i,j)$, $1\le i<j\le n$, can be partitioned to sets $L$, $T$ and $R$ such that
every edge $\{v_i,v_j,v_k\}$ of $H$, where $i<j<k$, satisfies that $(i,j)\in L$, $(i,k)\in T$ and $(j,k)\in R$.
The pairs that belong to $L$, $T$ and $R$ are referred to as \emph{left}, \emph{top} and \emph{right}, respectively (the reason
for this terminology comes from the visualization of the pairs of a triple by arcs over a horizontal line as in Figure~\ref{fig:LRT}).
We remark that when the vanishing ordering is fixed, the partition to the sets $L$, $T$ and $R$
is unique up to the pairs $(i,j)$ such that the vertices $v_i$ and $v_j$ are not contained in a common edge, and
we can choose all such undetermined pairs to be, say, left.
So, we can speak about left, top and right pairs whenever a vanishing ordering is fixed.

The characterization of $3$-graphs with uniform Tur\'an density equal to zero
reads as follows.

\begin{theorem}[Reiher, R\"odl and Schacht~\cite{ReiRS18}]
\label{thm:rrs}
Let $H$ be a $3$-graph.
The uniform Tur\'an density of $H$ is zero if and only if
$H$ has a vanishing ordering of its vertices.
\end{theorem}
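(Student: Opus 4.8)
The plan is to prove the two implications separately. The implication ``$H$ has a vanishing ordering $\Rightarrow$ the uniform Tur\'an density of $H$ is zero'' is the harder one and will go through the regularity method for uniformly dense hypergraphs; for the converse I will use a random construction showing that if $H$ has no vanishing ordering then the uniform Tur\'an density of $H$ is at least $1/27$, in particular positive.

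\emph{Construction.} Assume $H$ has no vanishing ordering, and fix $\varepsilon>0$. For $N$ large, colour each pair of $[N]$ independently and uniformly by one of the symbols $L,T,R$, calling the colouring $\phi$, and let $G_\phi$ be the $3$-graph on $[N]$ in which $\{a,b,c\}$ with $a<b<c$ is an edge precisely when $\phi(\{a,b\})=L$, $\phi(\{a,c\})=T$ and $\phi(\{b,c\})=R$. Since the three pairs of a triple are distinct, each triple is an edge with probability $1/27$, so for every $W\subseteq[N]$ the number of edges inside $W$ has mean $\tfrac{1}{27}\binom{|W|}{3}$; because changing $\phi$ on a single pair changes this count by at most $N$, the bounded-differences inequality gives a deviation of at least $\varepsilon N^3$ only with probability $\exp(-\Omega(\varepsilon^2 N^2))$, and a union bound over the $2^N$ sets $W$ shows that, for $N$ large, some $\phi$ makes $G_\phi$ be $(1/27,\varepsilon)$-dense. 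On the other hand $G_\phi$ is $H$-free for \emph{every} $\phi$: an embedding $\psi$ of $H$ into $G_\phi$ would, after the vertices of $H$ are ordered as $w_1,\dots,w_n$ with $\psi(w_1)<\dots<\psi(w_n)$ and each pair $(i,j)$ with $i<j$ is placed into $L$, $T$ or $R$ according to $\phi(\{\psi(w_i),\psi(w_j)\})$, turn every edge $\{w_i,w_j,w_k\}$ with $i<j<k$ into $(i,j)\in L$, $(i,k)\in T$, $(j,k)\in R$; that is, it would exhibit a vanishing ordering of $H$, a contradiction. Letting $N\to\infty$ gives the bound.

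\emph{The regularity argument.} Now fix a vanishing ordering $v_1,\dots,v_n$ of $H$ with pair-partition $L,T,R$ and fix $d>0$; I must find $\varepsilon>0$ so that every large enough $(d,\varepsilon)$-dense $3$-graph $G$ contains $H$. I apply the regularity method for uniformly dense hypergraphs to such a $G$, obtaining a bounded-size \emph{reduced hypergraph}: vertex classes $\mathcal P^{ij}$ indexed by pairs $i<j$, all of a common size $m$, together with a family $\mathcal A$ of ``dense triads'' $(P,Q,R)$ with $P\in\mathcal P^{ij}$, $Q\in\mathcal P^{ik}$, $R\in\mathcal P^{jk}$, which inherits the density of $G$ in the form: for all $i<j<k$ and all $X\subseteq\mathcal P^{ij}$, $Y\subseteq\mathcal P^{ik}$, $Z\subseteq\mathcal P^{jk}$, the number of triads of $\mathcal A$ lying in $X\times Y\times Z$ is at least $d\,|X|\,|Y|\,|Z|-o(m^3)$. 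By the associated counting/embedding lemma it then suffices to find a \emph{reduced copy} of $H$: assign $v_1,\dots,v_n$ to class indices $1,\dots,n$ in this order, and choose for each pair $\{v_i,v_j\}$ a vertex $P_{ij}\in\mathcal P^{ij}$ so that $(P_{ij},P_{ik},P_{jk})\in\mathcal A$ for every edge $\{v_i,v_j,v_k\}$ of $H$. The vanishing ordering is precisely what guarantees that each pair $\{v_i,v_j\}$ occupies a \emph{fixed} role---left, top or right---in every triad that contains it, according to whether $(i,j)\in L$, $T$ or $R$. One then builds the reduced copy by iterating over the indices $k=1,\dots,n$: the constraints first felt at step $k$ are $(P_{ij},P_{ik},P_{jk})\in\mathcal A$ over the edges $\{v_i,v_j,v_k\}$ with $i<j<k$, where $P_{ij}$ is already fixed while $P_{ik}$ (a top piece) and $P_{jk}$ (a right piece) are chosen now; since each earlier index is either a ``top-side'' or a ``right-side'' index but never both, the constraint graph on the newly introduced pairs is bipartite and the top and the right pieces are decoupled except through these constraints, and one uses the subset-density property to shrink large candidate subsets of the affected classes while keeping them a positive fraction of their size, finally picking compatible pieces. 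Feeding the reduced copy into the embedding lemma produces a copy of $H$ in $G$.

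\emph{Main obstacle.} The substance of the ``if'' direction is this last iterative selection. A plain union bound over the edges of $H$ breaks down as soon as $d$ is small, so one must propagate candidate sets through the whole iteration using \emph{only} the subset-density of $\mathcal A$---a $(d,\varepsilon)$-dense hypergraph need not be quasirandom, so nothing stronger about $\mathcal A$ is available. What keeps the bookkeeping under control is that each class is refined only a bounded number of times, which is exactly ensured by the left/top/right decoupling around each newly added index; and this is precisely the property that fails for an $H$ with no vanishing ordering, where some pair would be forced into two incompatible roles at once.
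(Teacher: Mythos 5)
The paper does not prove Theorem~\ref{thm:rrs}; it cites it from~\cite{ReiRS18} and, after the statement, only sketches the ``only if'' direction (no vanishing ordering implies uniform Tur\'an density at least $1/27$) via the random $\{L,T,R\}$ colouring construction, in support of Corollary~\ref{cor:rrs}. Your treatment of that direction reproduces exactly this construction, and the concentration argument (bounded differences plus a union bound over $2^N$ vertex subsets) and the observation that any embedding of $H$ into $G_\phi$ would induce a vanishing ordering are both correct.

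For the ``if'' direction --- hypergraph regularity, a reduced hypergraph inheriting the subset-density, and an iterative choice of pieces $P_{ij}$ driven by the fixed $L/T/R$ role of each pair --- your outline matches the strategy of~\cite{ReiRS18} in spirit, but as written there is a genuine gap in the iteration. You assert that at step $k$ the piece $P_{ij}$ with $i<j<k$ is ``already fixed''; however, if $(i,j)$ is a left pair then $(i,j)$ carries no constraint at step $j$ (its constraints first arise at steps $\ell>j$ as the left pair of edges $\{v_i,v_j,v_\ell\}$), so $P_{ij}$ has to be chosen \emph{in advance}, by averaging over the density bound simultaneously in every triad $(i,j,\ell)$ in which it will later appear, and the candidate sets for the affected top and right classes must then be pruned accordingly. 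Moreover, the bipartiteness of the step-$k$ constraint graph does not by itself decouple the top and right choices, since the constraints link them; the dependent-choice bookkeeping that shows a positive fraction of each class survives a bounded number of refinements is exactly the heart of the argument in~\cite{ReiRS18}. Your ``main obstacle'' paragraph correctly locates this difficulty, but the proposal names it rather than carries it out, so it does not stand alone as a proof of the ``if'' direction. Since the paper itself takes the theorem as a black box, this is not a defect relative to the paper, but it should be flagged.
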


If a $3$-graph $H$ has no vanishing ordering,
then the uniform Tur\'an density of $H$ is at least $1/27$ because of the following construction from~\cite{ReiRS18}.
Indeed, fix a $3$-graph $H$ with no vanishing ordering and
construct a random $n$-vertex $3$-graph $H_n$ as follows:
let $v_1,\ldots,v_n$ be the vertices of $H_n$,
randomly partition all pairs of those vertices to sets $L$, $T$ and $R$, and
include $\{v_i,v_j,v_k\}$, $1\le i<j<k\le n$, as an edge of $H_n$
if $(i,j)\in L$, $(i,k)\in T$ and $(j,k)\in R$.
Observe that $H$ cannot be a subhypergraph of $H_n$ (as $H$ has no vanishing ordering).
On the other hand, for every $\varepsilon>0$ and $\delta>0$,
there exists $n_0$ such that the density of every subset of at least $\varepsilon n$ vertices of $H_n$ for $n\ge n_0$
is at least $1/27-\delta$ with positive probability.
It follows that the uniform Tur\'an density of $H$ is at least $1/27$ as claimed.
Hence, Theorem~\ref{thm:rrs} implies the following.

\begin{corollary}\label{cor:rrs}
The uniform Tur\'an density of every $3$-graph is either zero or at least $1/27$.
\end{corollary}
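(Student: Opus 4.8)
The plan is to split into two cases according to whether the $3$-graph $H$ under consideration admits a vanishing ordering, combining Theorem~\ref{thm:rrs} with the random construction $H_n$ described above. If $H$ has a vanishing ordering, then Theorem~\ref{thm:rrs} gives directly that the uniform Tur\'an density of $H$ is zero and there is nothing more to prove. So assume from now on that $H$ has no vanishing ordering; I will show that its uniform Tur\'an density is at least $1/27$ by checking that, for all $\varepsilon,\delta>0$ and all sufficiently large $n$, the random $3$-graph $H_n$ is simultaneously $H$-free and has $\varepsilon$-linear density at least $1/27-\delta$ with positive probability.

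The $H$-freeness holds deterministically: a copy of $H$ inside $H_n$ would induce, via the order of the copied vertices and the assignment of their pairs to $L$, $T$ and $R$, a vanishing ordering of $H$, contradicting our assumption. For the density bound, fix a vertex subset $W$ with $|W|=w\ge\varepsilon n$; the vertices of $W$ inherit the linear order from $v_1,\ldots,v_n$, and a triple contained in $W$ is an edge of $H_n$ exactly when the three fair independent assignments place its left pair in $L$, its top pair in $T$ and its right pair in $R$, an event of probability $1/27$. Hence the number $X_W$ of edges induced by $W$ has expectation $\tfrac{1}{27}\binom{w}{3}$. To get this bound for all such $W$ at once, note that $X_W$ is a function of the $\binom{w}{2}$ pair-assignments inside $W$, and changing any one of them alters $X_W$ by at most $w$; the bounded-differences inequality then yields $\Pr\!\left[X_W<\tfrac{1}{27}\binom{w}{3}-\delta\binom{w}{3}\right]\le\exp\!\left(-\Omega(\delta^2\varepsilon^2 n^2)\right)$, and a union bound over the at most $2^n$ choices of $W$ shows that, with probability tending to $1$, every subset $W$ with $|W|\ge\varepsilon n$ induces at least $(1/27-\delta)\binom{|W|}{3}$ edges. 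Together with the deterministic $H$-freeness this produces, for every large $n$, a $3$-graph $H_n$ that is $H$-free and has $\varepsilon$-linear density at least $1/27-\delta$; since $\varepsilon$ and $\delta$ were arbitrary, the uniform Tur\'an density of $H$ is at least $1/27$.

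The computations here are entirely routine, and the one point that needs care is that the density estimate has to hold uniformly over all of the exponentially many vertex subsets simultaneously; this is exactly what the concentration-plus-union-bound step delivers, using that the affordable error $\delta\binom{|W|}{3}=\Omega(n^3)$ comfortably dominates the $\exp(-\Omega(n^2))$-scale fluctuations of $X_W$. Since this construction and its analysis are due to Reiher, R\"odl and Schacht, one may alternatively just cite~\cite{ReiRS18} for this step, so that the corollary becomes an immediate consequence of Theorem~\ref{thm:rrs}.
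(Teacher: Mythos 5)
Your argument is the same as the paper's: you split on whether $H$ has a vanishing ordering, invoke Theorem~\ref{thm:rrs} in the first case, and in the second case use the random construction $H_n$ (random partition of pairs into $L$, $T$, $R$) together with the observation that any embedding of $H$ into $H_n$ would pull back to a vanishing ordering, plus a concentration estimate for the induced densities. The paper sketches exactly this before stating the corollary, deferring the details to~\cite{ReiRS18}; you simply fill in the concentration-plus-union-bound step explicitly, which is correct.
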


Reiher, R\"odl and Schacht~\cite{ReiRS18} asked
whether there exist $3$-graphs with uniform Tur\'an density equal or arbitrarily close to $1/27$.
In this paper we answer this question in the affirmative.

\begin{theorem}\label{thm:intro}
There exists an infinite family of $3$-graphs with uniform Tur\'an density equal to $1/27$.
\end{theorem}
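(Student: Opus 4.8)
The plan is to identify a single $3$-graph $H^\star$ (and then an infinite family built from it) that has no vanishing ordering — so that its uniform Tur\'an density is at least $1/27$ by Corollary~\ref{cor:rrs} — and for which we can prove the matching upper bound $1/27$. For the upper bound we must show: for every $d>1/27$ there is $\varepsilon>0$ such that every sufficiently large $(d,\varepsilon)$-dense $3$-graph $G$ contains $H^\star$. The natural strategy is a reduced/cluster-hypergraph argument in the spirit of Reiher, R\"odl and Schacht: apply the (weak) hypergraph regularity lemma to $G$, and use $(d,\varepsilon)$-density with $d>1/27$ to find, inside the regular partition, a structure that is ``richer'' than the extremal random construction $H_n$ described before Corollary~\ref{cor:rrs}. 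Concretely, in $H_n$ the three colour classes $L,T,R$ each have density $\tfrac13$ on the pairs and an edge needs the pattern (left, top, right); the product $\tfrac13\cdot\tfrac13\cdot\tfrac13=\tfrac1{27}$ is exactly the bottleneck. When the linear density strictly exceeds $1/27$, an averaging/defect argument should force the appearance of a pair of vertices whose link behaves like a dense bipartite-type pattern that cannot be $3$-coloured consistently in the $L/T/R$ sense, and one then embeds $H^\star$ greedily along a carefully chosen ordered skeleton.

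The key steps, in order, are as follows. First, I would define $H^\star$ explicitly and verify it has no vanishing ordering — this is a finite check: one shows that for every ordering of $V(H^\star)$, no partition of the pairs into $L,T,R$ is compatible with all edges (the obstruction is typically a short configuration such as two edges forcing a pair to be simultaneously ``left'' and ``right''). This already gives $\pi_{\mathrm{u}}(H^\star)\ge 1/27$. Second, I would set up the regularity machinery: given a $(d,\varepsilon)$-dense $G$ on $n$ vertices with $d=1/27+\gamma$, regularize the pairs, pass to a constant-size reduced structure, and record that density $1/27+\gamma/2$ (say) survives on a linear-sized vertex subset. Third — the heart of the argument — I would show that this density forces a ``palette'' configuration in the reduced structure that in turn yields, back in $G$, an ordered sequence of vertices $u_1<u_2<\dots<u_m$ such that the triples with the prescribed left/top/right incidences are all present with positive density; here $m=|V(H^\star)|$ (or a bounded blow-up). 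Finally, a standard greedy/counting embedding inside the dense regular triads places $H^\star$ into $G$, giving $\pi_{\mathrm{u}}(H^\star)\le 1/27$. The infinite family is then obtained by taking disjoint unions, blow-ups, or a sequence $H^\star_1\subset H^\star_2\subset\cdots$ all sharing the no-vanishing-ordering obstruction while the embedding argument is uniform in the family — one checks the upper-bound proof only uses boundedly many vertices at a time.

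I expect the main obstacle to be the third step: extracting, from ``linear density strictly above $1/27$'', the precise structured configuration that certifies an embedding of $H^\star$. The difficulty is that $(d,\varepsilon)$-density is a global hereditary condition, and turning a tiny density surplus $\gamma$ into a rigid combinatorial pattern requires carefully combining regularity with a counting lemma for the underlying $3$-uniform structure, and — crucially — choosing $H^\star$ so that its edge set is exactly what such a surplus configuration provides (this is why the statement promises an \emph{infinite family} rather than a single canonical hypergraph: the construction must be reverse-engineered from what the density argument yields). A secondary technical point is ensuring the embedding respects an ordering compatible with the reduced structure, since $H^\star$ has no vanishing ordering and so the embedding cannot be done ``monotonically'' in any naive sense; one must instead embed into a bounded-depth recursive/iterated blow-up of the random colouring pattern, which is where the $1/27$ threshold is matched tightly.
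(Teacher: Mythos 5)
There is a genuine gap: the step you yourself flag as ``the heart of the argument'' — turning a small density surplus above $1/27$ into a rigid embeddable configuration — is exactly where the paper's real content lies, and your sketch does not supply the idea that makes it work. The paper does not run a regularity-lemma argument directly on $(d,\varepsilon)$-dense $3$-graphs; instead it invokes Reiher's black-box reduction (Proposition~\ref{prop:reiher}) to $N$-partitioned hypergraphs, and there the key observation (proved in the Claim inside Lemma~\ref{lem:density}) is elementary but decisive: if a triad has density $>1/27$ and one bounds the fractions $\ell,t,r$ of left/top/right vertices with non-negligible degree, then $\ell t r>1/27$ forces $\ell+t+r>1$, so for each part some two of the three ``heavy'' sets must overlap in a linear fraction of vertices. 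Ramsey's theorem (Lemmas~\ref{lem:Rleft}--\ref{lem:Rvertical}) then makes this overlap uniform across all triads, producing one of two rigid patterns (``horizontal'' or ``vertical'' intersection) in Lemma~\ref{lem:main}. Your proposal has no substitute for this $\ell+t+r>1$ pigeonhole, no dichotomy between the two intersection types, and no mechanism to make the extracted pattern consistent across many triads — so the ``greedy embedding along an ordered skeleton'' has nothing concrete to grab onto.

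A second gap is that you never exhibit $H^\star$. The paper's Theorem~\ref{thm:main} reverse-engineers exactly which $3$-graphs the extracted patterns can absorb: $H_0$ must have no vanishing ordering (lower bound) but must decompose as $H_1\cup H_2$ (and separately $H'_1\cup H'_2$) into two spanning subhypergraphs each with a common vanishing ordering, with the shared pairs obeying a right/left (resp. top/left) compatibility — one decomposition per intersection type. Verifying these conditions for a concrete candidate (the $7$-vertex, $9$-edge graph of Theorem~\ref{thm:example9}, and the family $H^k$ of Theorem~\ref{thm:example8}) is a nontrivial check done via Lemma~\ref{lem:vanishing} and explicit orderings; your sketch gestures at ``a finite check'' without a candidate to check. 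Finally, your suggestion that the infinite family is obtained by ``disjoint unions, blow-ups, or a nested sequence'' is not what the paper does and is not obviously safe: blow-ups need not preserve the uniform Tur\'an density, and while a disjoint union of copies of a single good example would preserve both bounds, that only becomes available once you have the single example, which is precisely what the proposal is missing.
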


Theorem~\ref{thm:intro} is implied by the following.
In Theorem~\ref{thm:main}, we give a sufficient condition for a $3$-graph to have uniform Tur\'an density equal to $1/27$,
we then present a $7$-vertex $9$-edge $3$-graph (Theorem~\ref{thm:example9}) and
an infinite family of $3$-graphs (Theorem~\ref{thm:example8}),
whose smallest element has $8$ vertices and $9$ edges,
that satisfy the condition given in Theorem~\ref{thm:main}.
We remark that we have verified by a computer that there is no such $3$-graph with six or fewer vertices;
in fact, we have been able to show that every $3$-graph with six or fewer vertices
has Tur\'an density either equal to zero or at least $1/8$.

\section{Notation}
\label{sec:notation}

In this section, we introduce the notation used throughout the paper.
We write $[n]$ for the set of the first $n$ positive integers, i.e., $[n]=\{1,\ldots,n\}$.
An \emph{$n$-partitioned hypergraph $H$} is a $3$-graph such that
its vertex set is partitioned to sets $V_{ij}$, $1\le i<j\le n$, and
every edge $e$ of $H$ satisfies that there exist indices $1\le i<j<k\le n$ such that
one vertex of $e$ belongs to $V_{ij}$, one to $V_{ik}$ and one to $V_{jk}$.
The set of all edges of $H$ with a vertex from $V_{ij}$, one from $V_{ik}$ and one from $V_{jk}$
is called \emph{$(i,j,k)$-triad}, and
the \emph{density} of an $(i,j,k)$-triad is the number of edges forming the triad divided by $|V_{ij}|\cdot|V_{ik}|\cdot|V_{jk}|$.
Finally, the \emph{density} of an $n$-partitioned hypergraph $H$ is the minimum density of a triad of $H$.

We will use the following convention to simplify our notation used throughout the paper.
If $H$ is an $n$-partitioned hypergraph, we write $V_{ij}$, $1\le i<j\le n$, for its vertex parts, and
if $H'$ is an $n'$-partitioned hypergraph, we write $V'_{ij}$, $1\le i<j\le n'$, for its vertex parts,
i.e., we use the same mathematical accents when denoting a hypergraph as we do for its vertex parts without specifying the relation explicitly.
The \emph{reverse} of an $n$-partitioned hypergraph $H$ is an $n$-partitioned hypergraph $H'$ 
with the same vertex set and the same edge set as $H$
but with the partition of vertices given by $V'_{ij}=V_{n-j+1,n-i+1}$ for $1\le i<j\le n$.

Let $H$ be an $n$-partitioned hypergraph.
We say that $H'$ is an \emph{induced} subhypergraph of $H$
if there exists $I\subseteq [n]$ such that
$H'$ is an $|I|$-partitioned hypergraph,
its vertex parts are the parts $V_{ij}$ of $H$ such that $i,j\in I$ and
$H'$ contains all edges of $H$ with vertices in the vertex parts forming $H'$.
We will refer to $H'$ as the subhypergraph of $H$ \emph{induced} by the index set $I$.

We next define several notions of a normalized degree of a vertex of an $n$-partitioned hypergraph $H$.
Fix $1\le i<j<k\le n$ and define
\begin{itemize}
\item $d_{ij\to k}(v)$ for $v\in V_{ij}$ to be the number of edges of the $(i,j,k)$-triad containing $v$ divided by $|V_{ik}|\cdot|V_{jk}|$,
\item $d_{ik\to j}(v)$ for $v\in V_{ik}$ to be the number of edges of the $(i,j,k)$-triad containing $v$ divided by $|V_{ij}|\cdot|V_{jk}|$, and
\item $d_{jk\to i}(v)$ for $v\in V_{jk}$ to be the number of edges of the $(i,j,k)$-triad containing $v$ divided by $|V_{ij}|\cdot|V_{ik}|$.
\end{itemize}
Note that the arrow in the notation indicates to which part of the triad $v$ belongs.
Further,
$d_{ij,ik}(v,v')$ for $v\in V_{ij}$ and $v'\in V_{ik}$
is the number of edges of the $(i,j,k)$-triad containing $v$ and $v'$ divided by $|V_{jk}|$;
we analogously use $d_{ij,jk}(v,v')$ for $v\in V_{ij}$ and $v'\in V_{jk}$ and
$d_{ik,jk}(v,v')$ for $v\in V_{ik}$ and $v'\in V_{jk}$.
The considered hypergraph $H$ when using the just introduced notation will always be clear from the context,
so we decided not to include it as a part of the notation to keep the notation simpler.

An $N$-partitioned hypergraph $H$ \emph{embeds} an $n$-vertex hypergraph $H_0$
if it is possible to choose distinct $1\le a_1,\ldots,a_n\le N$ corresponding to the vertices of $H_0$ and
vertices $v_{ij}\in V_{a_ia_j}$ for $1\le i<j\le n$ such that
if the $i$-th, $j$-th and $k$-th vertex of $H_0$ form an edge,
then $\{v_{ij},v_{ik},v_{jk}\}$ is an edge of $H$.

In~\cite{Rei20},
Reiher gave a general theorem that
relates computing the uniform Tur\'an density of $3$-graphs to embeddings in partitioned hypergraphs.
In our notation, the theorem reads as follows.

\begin{proposition}[{Reiher~\cite[Theorem 3.3]{Rei20}}]
\label{prop:reiher}
Let $H$ be a $3$-graph and $d\in [0,1]$.
Suppose that for every $\delta>0$ there exists $N$ such that
every $N$-partitioned hypergraph with density at least $d+\delta$ embeds $H$.
Then, the uniform Tur\'an density of $H$ is at most $d$.
\end{proposition}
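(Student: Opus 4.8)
The plan is to derive this bound from the hypothesis by the regularity method for $3$-uniform hypergraphs. Recall that, by the definition of the uniform Tur\'an density, it is enough to show that for every $\delta_0>0$ there exists $\varepsilon_0>0$ such that every sufficiently large $(d+\delta_0,\varepsilon_0)$-dense $3$-graph $G$ contains $H$ as a subhypergraph. So fix $\delta_0>0$ and apply the hypothesis with $\delta=\delta_0/3$ to obtain the corresponding integer $N$; the parameter $\varepsilon_0$, together with the parameters of the regularity lemma, will be chosen small/large enough along the way.

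The first step is to apply the regularity lemma for $3$-uniform hypergraphs (in its strong form) to $G$, choosing the parameters so that the resulting vertex partition has at least $N$ clusters $V_1,\dots,V_t$, the bipartite graphs refining each pair $\{V_i,V_j\}$ are equitable and regular, and all but a negligible fraction of the triads are regular. From this data we build a reduced hypergraph $\mathcal{R}$: its index set is $[t]$, the part $V_{ij}$ of $\mathcal{R}$ (for $i<j$) has one vertex for each bipartite block between $V_i$ and $V_j$, and a triple of blocks spanning clusters $i<j<k$ is declared an edge of $\mathcal{R}$ exactly when the corresponding triad is regular and $G$ has relative density at least $\delta_0/6$ on it. The key point is that $\mathcal{R}$ has density at least $d+\delta_0/3$. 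Indeed, fix $i<j<k$: each triple $(u,v,w)\in V_i\times V_j\times V_k$ is a triangle of exactly one block-triple spanning $\{i,j,k\}$, so the number of edges of $G$ inside $(V_i,V_j,V_k)$ equals the sum, over these block-triples, of (relative density on the triad) times (number of triangles the triad carries); by the standard reformulation of $(d+\delta_0,\varepsilon_0)$-denseness in terms of three vertex subsets, this sum is at least $(d+\delta_0-o(1))|V_i||V_j||V_k|$ as $\varepsilon_0\to0$ and the clusters grow. Since every relative density is at most $1$, block-triples of relative density at least $\delta_0/6$ must carry at least a $(d+5\delta_0/6-o(1))$-fraction of all triangles, hence (the blocks being equitable and all but an $o(1)$-fraction of triads regular) they form at least a $(d+\delta_0/3)$-fraction of the block-triples spanning $\{i,j,k\}$ once $\varepsilon_0$ is small and the clusters large. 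This is precisely what ``density at least $d+\delta_0/3$'' means for $\mathcal{R}$.

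Now pass to the subhypergraph of $\mathcal{R}$ induced by any $N$ of its indices; its density is still at least $d+\delta_0/3$, so by the choice of $N$ it embeds $H$. Unwinding this embedding yields distinct clusters $V_{a_1},\dots,V_{a_n}$ and, for each pair, a bipartite block $v_{ij}$ between $V_{a_i}$ and $V_{a_j}$ such that for every edge $\{i,j,k\}$ of $H$ the triad $(v_{ij},v_{ik},v_{jk})$ is a regular triad of $G$ of relative density at least $\delta_0/6$ (the blocks $v_{ij}$ for pairs not contained in an edge of $H$ play no role). To this configuration we apply the counting (embedding) lemma accompanying the hypergraph regularity lemma: since the relevant bipartite blocks are regular and the triads corresponding to the edges of $H$ are regular with density bounded away from $0$, there are vertices $x_1\in V_{a_1},\dots,x_n\in V_{a_n}$, in fact positively many $n$-tuples of them, with $\{x_i,x_j,x_k\}\in E(G)$ for every edge $\{i,j,k\}$ of $H$. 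These vertices span a copy of $H$ in $G$, which completes the argument.

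I expect the main difficulty to lie in two intertwined technical points. One is the usual parameter bookkeeping of the regularity method: the regularity error $\varepsilon_0$, the number of clusters, the number of blocks per pair of clusters, and the density threshold used to define the edges of $\mathcal{R}$ must be introduced in a carefully chosen order so that both the regularity lemma and its counting lemma apply. The other, more conceptual, is the averaging step that turns ``$(d+\delta_0,\varepsilon_0)$-denseness forces a high \emph{average} triad density'' into ``$\mathcal{R}$ has high \emph{minimum} triad density'': this works only because we are free to set the edge-threshold of $\mathcal{R}$ to a tiny absolute constant (here $\delta_0/6$), so that the trivial upper bound $1$ on relative densities already forces a $d+\Omega(\delta_0)$-fraction of the triads over each triple of clusters to exceed the threshold, which is exactly what the hypothesis needs; a threshold anywhere near $d$ would fail.
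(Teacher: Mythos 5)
The paper does not prove Proposition~\ref{prop:reiher}: it is quoted from Reiher's survey~\cite{Rei20} (Theorem 3.3 there) and used as a black box, so there is no internal proof to compare against. Your sketch reconstructs the standard hypergraph-regularity argument behind such transference statements, which is the route Reiher takes: regularize $G$, form the reduced $t$-partitioned hypergraph out of the regular triads on which $G$ has relative density at least a small absolute constant, use the three-set reformulation of $(d+\delta_0,\varepsilon_0)$-denseness together with the near-uniform distribution of triangles across the blocks of a regular equitable partition to lower-bound the triad density of the reduced hypergraph, restrict to $N$ indices to invoke the hypothesis, and finish with the hypergraph counting lemma. The outline is correct, and the two technical points you flag --- the ordering of the parameters, and the role of the tiny absolute threshold $\delta_0/6$ in upgrading an average triad-density bound to a pointwise one --- are exactly the places needing care. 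One step worth making explicit: the passage from ``high-density regular triads carry a $(d+5\delta_0/6-o(1))$-fraction of all triangles'' to ``they form a $(d+\delta_0/3)$-fraction of the triads'' relies on the graph triangle-counting lemma applied to the $\varepsilon$-regular equitable bipartite blocks, so that each triad supports essentially the same number of triangles; and the $o(1)$-fraction of irregular triads must be discarded before taking the minimum over index triples, since you only declared regular triads to be edges of the reduced hypergraph.
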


Some of our arguments use the classical Ramsey theorem for multicolored hypergraphs,
which we state below for reference.

\begin{theorem}[Ramsey~\cite{Ram30}]
\label{thm:ramsey}
For all $k,r,n\in\NN$, there exists $N\in\NN$ such that
every $k$-edge-coloring of a complete $R$-uniform hypergraph with $N$ vertices
contains a monochromatic complete $R$-uniform hypergraph with $n$ vertices.
\end{theorem}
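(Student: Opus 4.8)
The plan is to prove the statement by induction on the uniformity $R$ of the hypergraphs under consideration, treating the number of colors $k$ and the target size $n$ as free parameters that are allowed to grow during the argument. For the base case $R=1$, a $k$-edge-coloring of the complete $1$-uniform hypergraph on $N$ vertices is simply a $k$-coloring of the $N$ vertices, so if $N\ge k(n-1)+1$ the pigeonhole principle yields $n$ vertices of a common color, which form the desired monochromatic complete $1$-uniform sub-hypergraph. (One could equally take $R=2$, the classical graph Ramsey theorem, as the base case.)

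For the inductive step, assume the statement holds for $(R-1)$-uniform hypergraphs, and let $N_{R-1}(k,m)$ denote a value of $N$ that works there for $k$ colors and target size $m$. Given a $k$-edge-coloring $c$ of the complete $R$-uniform hypergraph on a vertex set $V_0$ with $|V_0|=N$, I would run the Erd\H os--Rado selection procedure: construct vertices $x_1,x_2,\ldots$ and nested sets $V_0\supseteq V_1\supseteq V_2\supseteq\cdots$ so that $x_{i+1}\in V_i$, $V_{i+1}\subseteq V_i\setminus\{x_{i+1}\}$, and for every $(R-1)$-subset $S$ of $\{x_1,\ldots,x_i\}$ the color $c(S\cup\{y\})$ is the same for all $y\in V_i$; call this common color $\phi(S)$. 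To pass from $V_{i-1}$ to $V_i$, pick any $x_i\in V_{i-1}$ and then repeatedly pass to the largest color class, once for each of the $\binom{i-1}{R-2}$ subsets $T\subseteq\{x_1,\ldots,x_{i-1}\}$ of size $R-2$, applying pigeonhole to the coloring $y\mapsto c(T\cup\{x_i\}\cup\{y\})$ of $V_{i-1}\setminus\{x_i\}$; this keeps at least a $k^{-\binom{i-1}{R-2}}$ fraction of the surviving vertices. Choosing $N$ large enough (a tower-type function of $R$, $k$ and $n$) guarantees that the procedure survives for $m:=N_{R-1}(k,n)$ steps, producing $x_1,\ldots,x_m$ and an auxiliary $k$-edge-coloring $\phi$ of the complete $(R-1)$-uniform hypergraph on $\{x_1,\ldots,x_m\}$.

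Now apply the induction hypothesis to $\phi$: there are indices $j_1<\cdots<j_n$ such that all $(R-1)$-subsets of $\{x_{j_1},\ldots,x_{j_n}\}$ receive the same $\phi$-color, say color $\alpha$. I claim $\{x_{j_1},\ldots,x_{j_n}\}$ is monochromatic for $c$ as well. Take any $R$-subset $\{x_{j_{a_1}},\ldots,x_{j_{a_R}}\}$ with $a_1<\cdots<a_R$ and set $S=\{x_{j_{a_1}},\ldots,x_{j_{a_{R-1}}}\}$; since $j_{a_R}>j_{a_{R-1}}$ and the $V_i$ are nested, $x_{j_{a_R}}\in V_{j_{a_{R-1}}}$, so by the defining property of the selection $c(S\cup\{x_{j_{a_R}}\})=\phi(S)=\alpha$. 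Hence every $R$-subset gets color $\alpha$, completing the induction.

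The routine but essential part is the bookkeeping in the inductive step: one must check that the recursive inequality relating $N$ to $N_{R-1}(k,N_{R-1}(k,n))$ through the surviving-fraction estimates $|V_i|\ge(|V_{i-1}|-1)\,k^{-\binom{i-1}{R-2}}$ does admit a finite solution, which it does and which yields the familiar tower-type (Ackermann-like in $R$) bound on $N$. No genuinely new idea beyond the Erd\H os--Rado argument is required; since the theorem is quoted only for later reference, this level of detail suffices.
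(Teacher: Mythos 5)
The paper merely cites Ramsey's theorem as a classical result and does not prove it, so there is no in-paper proof to compare against. Your argument is the standard Erd\H{o}s--Rado ``focusing'' proof by induction on the uniformity $R$, and it is correct: the base case $R=1$ is pigeonhole; the inductive step builds a sequence $x_1,\ldots,x_m$ together with nested surviving sets $V_i$, and the restriction to the largest color class for each of the $\binom{i-1}{R-2}$ sets $T\cup\{x_i\}$ ensures the invariant that the $c$-color of any $(R-1)$-subset of the chosen vertices together with any surviving vertex depends only on that $(R-1)$-subset, giving a well-defined induced coloring $\phi$. The final monochromaticity claim is justified correctly, since for an $R$-set $\{x_{j_{a_1}},\ldots,x_{j_{a_R}}\}$ with $a_1<\cdots<a_R$ the nestedness gives $x_{j_{a_R}}\in V_{j_{a_R}-1}\subseteq V_{j_{a_{R-1}}}$, so $c(\{x_{j_{a_1}},\ldots,x_{j_{a_R}}\})=\phi(\{x_{j_{a_1}},\ldots,x_{j_{a_{R-1}}}\})$. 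The recursion $|V_i|\ge(|V_{i-1}|-1)k^{-\binom{i-1}{R-2}}$ is the right survival bound and does close, yielding the usual tower-type estimate. One minor typographical note: the theorem statement as printed quantifies over ``$k,r,n$'' but then writes ``$R$'' for the uniformity; you interpreted this correctly.
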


\section{Preprocessing steps}
\label{sec:init}

In this section, we present two lemmas that
we use to tame a given partitioned hypergraph
before we can apply our main arguments.
The first lemma says that we can find a subhypergraph of a partitioned hypergraph such that
the proportions of left, top and right vertices with non-negligible degrees in all triads
are approximately the same.

\begin{lemma}
\label{lem:discretize}
For every $\varepsilon>0$ and $n\in\NN$,
there exists $N\in\NN$ such that the following holds.
For every $N$-partitioned hypergraph $H$,
there exist reals $\ell$, $t$ and $r$, and
an $n$-partitioned induced subhypergraph $H'$ such that
for all $1\le i<j<k\le n$
\begin{align*}
\ell |V'_{ij}| & \le |\{v\in V'_{ij},\,d_{ij\to k}(v)\ge\varepsilon\}| \le (\ell+\varepsilon) |V'_{ij}|, \\
t |V'_{ik}| & \le |\{v\in V'_{ik},\,d_{ik\to j}(v)\ge\varepsilon\}| \le (t+\varepsilon) |V'_{ik}|,\mbox{ and} \\
r |V'_{jk}| & \le |\{v\in V'_{jk},\,d_{jk\to i}(v)\ge\varepsilon\}| \le (r+\varepsilon) |V'_{jk}|.
\end{align*}
\end{lemma}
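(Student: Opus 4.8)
The plan is to obtain $H'$ by a single application of Ramsey's theorem (Theorem~\ref{thm:ramsey}) for $3$-uniform hypergraphs, after discretising the relevant degree statistics of $H$ into finitely many buckets.

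First I would fix a partition of $[0,1]$ into $q:=\lceil 1/\varepsilon\rceil$ intervals $I_1,\dots,I_q$, each of length at most $\varepsilon$ — for instance $I_m=[(m-1)\varepsilon,m\varepsilon)$ for $m<q$ and $I_q=[(q-1)\varepsilon,1]$ — and write $\beta(f)\in[q]$ for the index with $f\in I_{\beta(f)}$. For a pair $a<b$ in $[N]$ and a third index $c$, let $\varphi^L_{ab,c}$ denote the proportion of vertices $v\in V_{ab}$ with $d_{ab\to c}(v)\ge\varepsilon$ (set it to $0$ if $V_{ab}=\emptyset$), and define $\varphi^T_{ac,b}$ and $\varphi^R_{bc,a}$ analogously using $d_{ac\to b}$ on $V_{ac}$ and $d_{bc\to a}$ on $V_{bc}$. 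I would then colour each triple $a<b<c$ of $[N]$ by
\[
\chi(\{a,b,c\})=\bigl(\beta(\varphi^L_{ab,c}),\ \beta(\varphi^T_{ac,b}),\ \beta(\varphi^R_{bc,a})\bigr)\in[q]^3 .
\]
Applying Theorem~\ref{thm:ramsey} to complete $3$-uniform hypergraphs, with $q^3$ colours and target size $n$, yields the desired $N$ together with a monochromatic set $I=\{a_1<\dots<a_n\}\subseteq[N]$; let $(m_L,m_T,m_R)$ be its colour.

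Next I would take $H'$ to be the induced subhypergraph of $H$ on the index set $I$, with $V'_{ij}:=V_{a_ia_j}$. The point that makes this work is that the relabelling copies the parts verbatim, so it preserves all three degree notions exactly: the edges of the $(i,j,k)$-triad of $H'$ are precisely the edges of the $(a_i,a_j,a_k)$-triad of $H$, whence $d_{ij\to k}(v)=d_{a_ia_j\to a_k}(v)$ for $v\in V'_{ij}$, and likewise for $d_{ik\to j}$ and $d_{jk\to i}$. Since in the triple $\{a_i,a_j,a_k\}$ the pair $(a_i,a_j)$ occupies the left slot, $(a_i,a_k)$ the top slot and $(a_j,a_k)$ the right slot, monochromaticity gives, for all $1\le i<j<k\le n$, that $\varphi^L_{ij,k}\in I_{m_L}$, that the top-proportion of $V'_{ik}$ towards $j$ lies in $I_{m_T}$, and that the right-proportion of $V'_{jk}$ towards $i$ lies in $I_{m_R}$. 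Taking $\ell$, $t$, $r$ to be the left endpoints of $I_{m_L}$, $I_{m_T}$, $I_{m_R}$, each of these intervals is contained in $[\ell,\ell+\varepsilon]$, $[t,t+\varepsilon]$, $[r,r+\varepsilon]$ respectively, and multiplying through by the (non-negative) part sizes yields exactly the asserted inequalities; empty parts make all three conclusions vacuous.

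I do not expect a genuine obstacle: the lemma is really a packaging statement, and the only thing needing care is designing the colour space so that one monochromatic set simultaneously controls the left, top and right proportions of every pair towards every admissible third index. This succeeds because every triple containing a fixed pair determines that pair's role (left, top or right) unambiguously, so a monochromatic set pins each of the three roles down to a single interval of length at most $\varepsilon$, uniformly over all triples.
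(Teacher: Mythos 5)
Your proof is correct and is essentially the same as the paper's: both colour the triples $\{a,b,c\}\subseteq[N]$ by the discretised proportions of vertices of non-negligible left, top and right degree (you with explicit $\varepsilon$-intervals, the paper with $\lfloor\cdot/\varepsilon\rfloor$), apply Ramsey's theorem for $3$-uniform hypergraphs, and take the induced subhypergraph on a monochromatic set. The only differences are cosmetic (choice of bucketing function and handling of empty parts).
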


\begin{proof}
Apply Theorem~\ref{thm:ramsey} for $3$-graphs with $k_R=\left(\lfloor\varepsilon^{-1}\rfloor+1\right)^3$ and $n_R=n$ to get $N$ (the variables on the left of the equalities are named as in the statement of Theorem~\ref{thm:ramsey} but with the subscript $R$ added).
Let $H$ be an $N$-partitioned hypergraph.
Consider the following $k_R$-edge-coloring of the complete $3$-graph with vertex set $[N]$:
for $1\le i<j<k\le N$, let $L$ be the set of vertices $v\in V_{ij}$ such that $d_{ij\to k}(v)\ge\varepsilon$,
$T$ the set of $v\in V_{ik}$ such that $d_{ik\to j}(v)\ge\varepsilon$, and
$R$ the set of $v\in V_{jk}$ such that $d_{jk\to i}(v)\ge\varepsilon$, and
color the edge $\{i,j,k\}$ with the triple $\left(\left\lfloor\frac{|L|}{\varepsilon|V_{ij}|}\right\rfloor,\left\lfloor\frac{|T|}{\varepsilon|V_{ik}|}\right\rfloor,\left\lfloor\frac{|R|}{\varepsilon|V_{jk}|}\right\rfloor\right)$.
Theorem~\ref{thm:ramsey} implies that there exists a subset $I\subseteq [N]$ such that
all edges with vertices in $I$ have the same color, say $(\ell',t',r')$.
The $n$-partitioned subhypergraph $H'$ of $H$ induced by the index set $I$
satisfies the statement of the lemma with $\ell=\varepsilon\ell'$, $t=\varepsilon t'$ and $r=\varepsilon r'$.
\end{proof}

The next lemma concerns partitioned hypergraphs with density larger than $1/27$,
and yields that every such hypergraph contains an induced subhypergraph with one of the three properties described in the lemma.
We will refer to the first of these properties as
the case of \emph{horizontal intersection} and the other as the case of \emph{vertical intersection} (the second and third case
are symmetric by reversing the order of the parts of the partitioned hypergraph).
The case of horizontal intersection corresponds to the existence of an edge in an $(k',i,j)$-triad and
an edge in an $(i,j,k)$-triad, $k'<i<j<k$, that share a common vertex of $V_{ij}$ (the adjective horizontal
comes from the fact that the $(k',i,j)$-triad and the $(i,j,k)$-triad can be visualized by being drawn
as overlapping edges following each other on a horizontal line).
The case of vertical intersection corresponds to the existence of an edge in an $(i,k',j)$-triad and
an edge in an $(i,j,k)$, $i<k'<j<k$, that share a common vertex of $V_{ij}$ (the adjective vertical
comes from the fact that the two triads cannot be visualized as in the previous case as the edges are nested) or
the existence of an edge in an $(j,k',k)$-triad and an edge in an $(i,j,k)$, $i<j<k'<k$.
Using this terminology, the next lemma asserts that every partitioned hypergraph with density larger than $1/27$
has a subhypergraph such that all triads have non-trivial horizontal intersection or
all triads have non-trivial vertical intersection.
We will show that hypergraphs that we construct later can be embedded
in every partitioned hypergraph where all triads have non-trivial horizontal intersection and
in every partitioned hypergraph where all triads have non-trivial vertical intersection,
which are the two cases corresponding to the two possible outcomes of Lemma~\ref{lem:main}.

\begin{lemma}
\label{lem:density}
For every $\delta>0$, there exists $\varepsilon>0$ such that
for every $n\in\NN$, there exists $N\in\NN$ such that the following holds.
For every $N$-partitioned hypergraph $H$ with density at least $1/27+\delta$,
there exists an $n$-partitioned induced subhypergraph $H'$ of $H$ such that
at least one of the following holds.
\begin{itemize}
\item For all $1\le k'<i<j<k\le n$,
      the set $V'_{ij}$ contains at least $\varepsilon |V'_{ij}|$ vertices $v$ such that
      $d_{ij\to k}(v)\ge\varepsilon$ and $d_{ij\to k'}(v)\ge\varepsilon$.
\item For all $1\le i<k'<j<k\le n$,
      the set $V'_{ij}$ contains at least $\varepsilon |V'_{ij}|$ vertices $v$ such that
      $d_{ij\to k}(v)\ge\varepsilon$ and $d_{ij\to k'}(v)\ge\varepsilon$.
\item For all $1\le i<j<k'<k\le n$,
      the set $V'_{jk}$ contains at least $\varepsilon |V'_{jk}|$ vertices $v$ such that
      $d_{jk\to i}(v)\ge\varepsilon$ and $d_{jk\to k'}(v)\ge\varepsilon$.
\end{itemize}
\end{lemma}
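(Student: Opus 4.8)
The plan is to first invoke Lemma~\ref{lem:discretize} to pass to an induced subhypergraph on which the proportions $\ell$, $t$, $r$ of ``active'' left, top and right vertices (those with degree at least $\varepsilon$ in a given direction) are essentially constant across all triads, up to an additive $\varepsilon$. The key arithmetic observation is that if a triad has density at least $1/27+\delta$, then it cannot be that all three of $\ell$, $t$, $r$ are close to $1/3$ while the active vertices behave ``independently'': more precisely, a counting argument (Cauchy--Schwarz or just averaging over the triad) shows that the density of an $(i,j,k)$-triad is at most roughly $\ell \cdot t \cdot r$ plus lower-order terms, so density exceeding $1/27+\delta$ forces $\ell t r \ge 1/27 + \delta/2$, and hence at least one of $\ell, t, r$ must exceed $1/3$ by a definite amount, say $1/3 + \delta'$ for some $\delta' = \delta'(\delta) > 0$. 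Since $\ell$, $t$, $r$ were made uniform across triads, the \emph{same} coordinate (left, top, or right) exceeds $1/3 + \delta'$ in every triad of the subhypergraph.

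Next I would split into cases according to which coordinate is large. Suppose it is the left coordinate, so for every $1 \le i < j < k \le m$ (working on a large enough intermediate subhypergraph with $m$ parts) at least $(1/3 + \delta')|V_{ij}|$ vertices $v \in V_{ij}$ satisfy $d_{ij\to k}(v) \ge \varepsilon$. Fix a pair $(i,j)$. For each $k > j$ let $A_k \subseteq V_{ij}$ be this set of active vertices; for each $k' < i$ let $B_{k'} \subseteq V_{ij}$ be the set of $v$ with $d_{ij \to k'}(v) \ge \varepsilon$ — but note that when $k' < i < j$, the vertex $v \in V_{ij}$ plays the \emph{top} role in the $(k',i,j)$-triad, so $B_{k'}$ is a set of size at least $t|V_{ij}|$ in the discretized picture, and the relevant comparison is between a left-set and a top-set, or between two sets each of density $> 1/3 + \delta'$ depending on which coordinate is the large one. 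The point is that two subsets of $V_{ij}$ each of relative size greater than $1/3 + \delta'$, hmm, need not intersect. So a single large coordinate is not by itself enough: I need the intersection $A_k \cap B_{k'}$ (appropriately defined) to be large. Here is where the three conclusions of the lemma — horizontal vs. vertical intersection — come from: the roles played by $v \in V_{ij}$ in the triads $(k',i,j)$, $(i,k',j)$, $(i,j,k)$ are top, right, left respectively (and in $(j,k',k)$ the role is left for $V_{jk}$), so I want, for every pair, one \emph{fixed} role $\rho \in \{L,T,R\}$ such that the $\rho$-active set has relative size bounded below away from $0$ \emph{and} intersects the $L$-active set (for the $(i,j,k)$-triad) in at least $\varepsilon$-fraction.

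To get the intersection I would iterate the discretization / Ramsey argument one more level: refine the coloring in Lemma~\ref{lem:discretize} to also record, for each ordered 4-tuple of parts, the relative size of the intersection of the two relevant active sets inside the common part, rounded to multiples of $\varepsilon$. Applying Ramsey to 4-uniform complete hypergraphs then makes this intersection-size essentially constant. A final averaging shows that this constant cannot be too small: if for \emph{every} $k$ and $k'$ the sets $A_k$ and $B_{k'}$ inside $V_{ij}$ overlapped in less than an $\varepsilon$-fraction, then — since $|A_k|, |B_{k'}|$ are each $\gtrsim |V_{ij}|/3$ — one could, by choosing three pairwise ``almost disjoint'' such sets, contradict $|V_{ij}| \ge |A_k| + |B_{k'}| + (\text{third set}) - (\text{tiny overlaps})$ when the large coordinate forces the relevant densities above $1/3$; tracking which of the three roles supplies the third large set is exactly what produces the three alternatives in the statement, the second and third being the ``reverse'' images of each other. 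The main obstacle, and the step I expect to require the most care, is precisely this last extraction of a genuine \emph{intersection} of size $\ge \varepsilon|V'_{ij}|$ from mere lower bounds on the sizes of the two (or three) active sets — lower bounds summing to strictly more than $|V'_{ij}|$ give a nonempty, indeed linear-sized, intersection of some pair, but one must make sure the pair obtained is one of the admissible triad combinations, which is where the careful bookkeeping of left/top/right roles across the four-part configurations enters.
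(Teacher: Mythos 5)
Your overall strategy tracks the paper's quite closely: discretize via Lemma~\ref{lem:discretize}, use the density hypothesis to conclude that $\ell+t+r > 1+\Omega(\delta)$, apply Ramsey to a higher-uniformity auxiliary hypergraph to uniformize pairwise-intersection sizes, and then use the pigeonhole observation that three subsets of a common part $V_{ij}$ whose sizes sum to more than $|V_{ij}|$ cannot be pairwise almost disjoint. The side remark that "one of $\ell,t,r$ must exceed $1/3+\delta'$" is a distraction — the useful output of the density computation is the sum bound, which you do eventually articulate correctly — but that is a matter of exposition rather than substance.

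The genuine gap is the claim that $4$-uniform Ramsey suffices. For the pigeonhole to fire, you need all three sets (the right-active set with respect to an index before $i$, the top-active set with respect to an index strictly between $i$ and $j$, and the left-active set with respect to an index after $j$) to live inside the \emph{same} part $V_{ij}$. That requires five indices $k<i<k'<j<k''$. For a $4$-tuple $a<b<c<d$, check each of the six parts $V_{ab},\dots,V_{cd}$: every one of them is missing at least one of the three relations (a smaller outside index, a middle index, a larger outside index), so no part supports all three of $L,T,R$ simultaneously and the sum-greater-than-one argument cannot be run inside a single part. The paper therefore colors the complete $5$-uniform hypergraph (three colors, one per intersecting pair among $R\cap L$, $L\cap T$, $R\cap T$) and, after obtaining a monochromatic set $I_0$ of size $2n+1$, passes to the even-indexed elements so that an odd-indexed "spacer" is always available to complete a $4$-tuple to the required $5$-tuple; this is exactly the device that resolves the step you flag as "the main obstacle." As written, your proposal recognizes the need for an intersection of definite size but does not supply a mechanism that actually extracts it, so this step needs to be filled in — the $5$-uniform coloring plus the spacer trick is the missing ingredient.
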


\begin{proof}
We can assume that $\delta\le 1/2$ without loss of generality.
Set $\varepsilon=\delta/9$ and $\varepsilon_0=\delta/3$ and suppose that $n$ is given.
Apply Theorem~\ref{thm:ramsey} with $k_R=3$, $r_R=5$ and $n_R=2n+1$ to get $N_0$ and
apply Lemma~\ref{lem:discretize} with $\varepsilon_0$ and $N_0$ to get $N$.
Let $H$ be an $N$-partitioned hypergraph with density at least $1/27+\delta$, and
let $H_0$ be the induced $N_0$-partitioned subhypergraph provided by Lemma~\ref{lem:discretize} along with the reals $\ell$, $t$ and $r$ with the properties given in the statement of Lemma~\ref{lem:discretize}.

We first show the following

\begin{claim*}
$\ell+t+r\ge 1+\varepsilon_0$.
\end{claim*}

\begin{proof}[Proof of Claim]
Suppose that $\ell+t+r<1+\varepsilon_0$ and
choose arbitrary $i$, $j$ and $k$ such that $1\le i<j<k\le N_0$.
Let $L$ be the set of vertices $v\in V_{ij}$ such that $d_{ij\to k}(v)\ge\varepsilon$,
$T$ the set of vertices $v\in V_{ik}$ such that $d_{ik\to j}(v)\ge\varepsilon$, and
$R$ the set of vertices $v\in V_{jk}$ such that $d_{jk\to i}(v)\ge\varepsilon$.
Observe that
the number of edges of the $(i,j,k)$-triad that
contain a particular vertex $v\in V_{ij}\setminus L$ is at most $\varepsilon\cdot|V_{ik}|\cdot|V_ {jk}|$,
the number of edges that
contain a particular vertex $v\in V_{ik}\setminus T$ is at most $\varepsilon\cdot|V_{ij}|\cdot|V_{jk}|$, and
the number of edges that
contain a particular vertex $v\in V_{jk}\setminus R$ is at most $\varepsilon\cdot|V_{ij}|\cdot|V_{ik}|$.
Hence, the $(i,j,k)$-triad has at most $3\varepsilon\cdot|V_{ij}|\cdot|V_{jk}|\cdot|V_{ik}|$ edges
in addition to the edges with a vertex from $L$, a vertex from $T$ and a vertex from $R$.
The number of the edges of the latter type is at most $|L|\cdot |T|\cdot |R|$.
We derive using $\ell+t+r<1+\varepsilon_0$ that the density of the $(i,j,k)$-triad is at most
\[(\ell+\varepsilon_0)(t+\varepsilon_0)(r+\varepsilon_0)+3\varepsilon<
  \left(\frac{1+4\varepsilon_0}{3}\right)^3+3\varepsilon<
  \frac{1}{27}+3\varepsilon_0,
\]
which contradicts that the density of the $(i,j,k)$-triad is at least $\frac{1}{27}+\delta$.
\end{proof}

We next construct an auxiliary $3$-edge-coloring of the complete $5$-uniform hypergraph with vertex set $[N_0]$.
Consider $1\le k<i<k'<j<k''\le N_0$ and
let $R$ be the set of vertices $v$ of $V_{ij}$ such that $d_{ij\to k}(v)\ge\varepsilon$,
$T$ the set of vertices $v$ of $V_{ij}$ such that $d_{ij\to k'}(v)\ge\varepsilon$, and
$L$ the set of vertices $v$ of $V_{ij}$ such that $d_{ij\to k''}(v)\ge\varepsilon$.
If $|R\cap L|\ge\frac{\varepsilon_0}{3}|V_{ij}|$,
we color the edge $\{k,i,k',j,k''\}$ with the color red;
otherwise,
if $|L\cap T|\ge\frac{\varepsilon_0}{3}|V_{ij}|$,
we color the edge $\{k,i,k',j,k''\}$ with the color green;
otherwise,
if $|R\cap T|\ge\frac{\varepsilon_0}{3}|V_{ij}|$,
we color the edge $\{k,i,k',j,k''\}$ with the color blue.
If neither of the three cases applied, it would hold that
each of the sets $R\cap T$, $R\cap L$ and $L\cap T$ has fewer than $\frac{\varepsilon_0}{3}|V_{ij}|$ vertices;
this would imply that
\[|L\cup T\cup R|\ge |R|+|L|+|T|-|R\cap T|-|R\cap L|-|L\cap T|>(\ell+t+r-\varepsilon_0)|V_{ij}|\ge |V_{ij}|,\]
which is impossible since $L\cup T\cup R$ is a subset of $V_{ij}$.
Hence, one of the three cases always applies and so each edge gets a color.
Theorem~\ref{thm:ramsey} yields that
there exists a subset $I_0\subseteq [N_0]$ of size $2n+1$ such that
all edges with vertices from $I_0$ have the same color.

Let $I_0=\{a_1,a_2,a_3,\ldots,a_{2n+1}\}$ and
let $I=\{b_1,\ldots,b_n\}$ where $b_i=a_{2i}$ for $i=1,\ldots,n$.
We define the $n$-partitioned hypergraph $H'$ as the subhypergraph of $H_0$ induced by $I$,
where the vertex set $V_{ij}$ of $H'$ is identified with the vertex set $V_{b_ib_j}$ of $H_0$.
We claim that the $n$-partitioned hypergraph $H'$ has one of the three properties described in the statement of the lemma.
We distinguish three cases based on the common color of the edges of the complete $5$-uniform hypergraph induced by $I_0$.
If the common color is red,
we will show that the first property holds,
i.e., we obtain the case of the \emph{horizontal intersection}.
Indeed, for any integers $1\le k'<i<j<k\le n$,
consider $\{a_{2k'},a_{2i},a_{2i+1},a_{2j},a_{2k}\}$ and
the sets $L$ and $R$ from the definition of the color of this edge.
Observe that the set $L\cap R$ contains vertices $v$ such that
$d_{ij\to k'}(v)\ge\varepsilon$ and $d_{ij\to k}(v)\ge\varepsilon$ in the $n$-partitioned hypergraph $H'$,
which are vertices $v$ such that
$d_{a_{2i}a_{2j}\to a_{2k'}}(v)\ge\varepsilon$ and $d_{a_{2i}a_{2j}\to a_{2k}}(v)\ge\varepsilon$ in $H$.

If the common color is green,
we will show that the second property holds.
Indeed, for any integers $1\le i<k'<j<k\le n$,
consider the edge $\{a_{2i-1},a_{2i},a_{2k'},a_{2j},a_{2k}\}$.
The sets $T$ and $L$ from the definition of the color of the edge
have the property that the set $L\cap T$ contains vertices $v$ such that
$d_{ij\to k'}(v)\ge\varepsilon$ and $d_{ij\to k}(v)\ge\varepsilon$ in $H'$.
Finally, if the common color is blue,
we conclude using an argument analogous to the just analyzed case that $H'$ has the third property given in the statement of the lemma.
\end{proof}

\section{Embedding lemma}
\label{sec:embed}

In this section,
we prove Lemma~\ref{lem:main} which asserts that every partitioned hypergraph with density larger than $1/27$
contains one of two specific general substructures that can be used to embed our considered hypergraphs.
We remark that Lemmas~\ref{lem:Rleft}--\ref{lem:Rtop} are implicitly contained in~\cite{ReiRS18}
where they were proven using an iterative approach;
we prove them using Ramsey type arguments and
extend them to a more general setting (Lemmas~\ref{lem:Rhorizontal} and~\ref{lem:Rvertical})
which is needed to deal with two possible outcomes of Lemma~\ref{lem:density}.

We start with stating and proving Lemma~\ref{lem:Rleft}.

\begin{lemma}
\label{lem:Rleft}
For every $n$ and $\varepsilon>0$, there exists $N$ such that the following holds.
If $H$ is an $N$-partitioned hypergraph and
for each $1\le i<j<k\le N$ a subset $S_{ijk}$ of $V_{ij}$ with at least $\varepsilon |V_{ij}|$ vertices is given,
then there exist a subset $I\subseteq [N]$ of size $n$ and vertices $s_{ij}$, $i,j\in I$, $i<j$, such that
$s_{ij}\in S_{ijk}$ for all $i,j,k\in I$, $i<j<k$.
\end{lemma}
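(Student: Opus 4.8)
The plan is to build the index set $I=\{b_1<\dots<b_n\}$ greedily from left to right, keeping track of a set of ``live'' indices that are still available to be added, and to commit to the vertex $s_{b_pb_q}$ precisely at the moment when the larger index $b_q$ is added to $I$ --- choosing it so that it already lies in $S_{b_pb_qk}$ for \emph{every} index $k$ that is live at that moment. Since every later element $b_r$ (with $r>q$) will be picked out of a subset of the current live set, this automatically gives $s_{b_pb_q}\in S_{b_pb_qb_r}$ for all $r>q$, which is exactly what the conclusion demands.

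The only quantitative input is an averaging observation. If $V$ is a nonempty vertex set and $(S_k)_{k\in A}$ is a family of subsets of $V$ with $|S_k|\ge\varepsilon|V|$ for every $k\in A$, then $\sum_{v\in V}\bigl|\{k\in A:v\in S_k\}\bigr|=\sum_{k\in A}|S_k|\ge\varepsilon|V|\cdot|A|$, so some $v\in V$ lies in $S_k$ for at least $\varepsilon|A|$ indices $k\in A$; taking $A'=\{k\in A:v\in S_k\}$ gives $A'\subseteq A$ with $|A'|\ge\varepsilon|A|$ and $v\in S_k$ for all $k\in A'$.

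Now run the following process, starting with the live set $A_0=[N]$. Suppose $b_1<\dots<b_m$ and a live set $A_m$ with $\min A_m>b_m$ have been produced. Set $b_{m+1}:=\min A_m$ and $A:=A_m\setminus\{b_{m+1}\}$, and then, for $p=1,\dots,m$ in turn, apply the averaging observation to the part $V_{b_pb_{m+1}}$ with the family $(S_{b_pb_{m+1}k})_{k\in A}$; this produces a vertex $s_{b_pb_{m+1}}\in V_{b_pb_{m+1}}$ and a subset $A''\subseteq A$ with $|A''|\ge\varepsilon|A|$ and $s_{b_pb_{m+1}}\in S_{b_pb_{m+1}k}$ for all $k\in A''$, after which we replace $A$ by $A''$. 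Once all $p\le m$ are processed, put $A_{m+1}:=A$; then $A_{m+1}\subseteq A_m\setminus\{b_{m+1}\}$, so $\min A_{m+1}>b_{m+1}$, and $|A_{m+1}|\ge\varepsilon^m(|A_m|-1)$. Iterating, $|A_m|\ge\varepsilon^{\binom m2}(N-n)$ for $0\le m\le n-1$ with room to spare, so it suffices to take $N$ large enough in terms of $n$ and $\varepsilon$ to guarantee $A_m\neq\emptyset$ for all $0\le m\le n-1$; then $b_1<\dots<b_n$ are well defined and we set $I=\{b_1,\dots,b_n\}$, choosing $s_{b_pb_n}$ arbitrarily for the pairs whose larger index is $b_n$. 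To verify the conclusion, fix $p<q<r$ with $b_p,b_q,b_r\in I$. The vertex $s_{b_pb_q}$ was chosen during the step that produces $b_q$ (namely step $m=q-1$), at a point when it was guaranteed to lie in $S_{b_pb_qk}$ for every $k$ in the then-current value of $A$; all later modifications of $A$, both within that step and in subsequent steps, only delete elements, so $s_{b_pb_q}\in S_{b_pb_qk}$ for all $k\in A_q$. Since $r-1\ge q$ we have $b_r\in A_{r-1}\subseteq A_q$, and therefore $s_{b_pb_q}\in S_{b_pb_qb_r}$, as required.

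The argument is short and there is no serious obstacle; the only point that needs care is the maintenance of the invariant ``every already committed vertex is valid for the whole current live set,'' together with the size bookkeeping that keeps the live set nonempty throughout. It is worth noting that a direct appeal to Ramsey's theorem (Theorem~\ref{thm:ramsey}) would be clumsy here, because the number of possible choices for a vertex in a part grows with $|V_{ij}|$, which is unbounded; this is exactly why the averaging step, rather than a colouring argument, is the natural tool.
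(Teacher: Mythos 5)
Your greedy/averaging argument is correct and gives a genuinely different proof from the paper's. The paper also establishes this lemma (together with its siblings, Lemmas~\ref{lem:Rright}--\ref{lem:Rvertical}) via a Ramsey-type argument, but not in the naive way you rule out: instead of colouring tuples by the choice of a vertex in $V_{ij}$, it uses a \emph{two}-colouring of $n$-tuples $a_1<\dots<a_n$ according to whether or not the sets $S_{a_1a_2a_3},\dots,S_{a_1a_2a_n}$ have a common vertex, applies Theorem~\ref{thm:ramsey} to get a monochromatic set of size $\max\{2n,2+\lceil n/\varepsilon\rceil\}$, and then shows by an averaging step much like yours that the monochromatic colour cannot be red. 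So Ramsey is perfectly usable here; the paper's preference for it is mostly a matter of uniformity across the four parallel embedding lemmas. Your approach does have a concrete advantage in the bound: you obtain an explicit $N$ of order $n\,\varepsilon^{-\binom{n-1}{2}}$, whereas passing through Ramsey produces tower-type bounds.

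There is one small slip in the bookkeeping. The claimed bound $|A_m|\ge\varepsilon^{\binom m2}(N-n)$ does not actually follow by induction from $|A_{m+1}|\ge\varepsilon^m\bigl(|A_m|-1\bigr)$: the inductive step reduces to $\varepsilon^{\binom{m+1}{2}}(N-n)-\varepsilon^m\ge\varepsilon^{\binom{m+1}{2}}(N-n)$, i.e.\ $-\varepsilon^m\ge 0$, which is false. What the recursion does give, by unrolling, is $|A_m|\ge\varepsilon^{\binom m2}N-m$ (indeed $|A_{m+1}|\ge\varepsilon^m\bigl(\varepsilon^{\binom m2}N-m-1\bigr)=\varepsilon^{\binom{m+1}{2}}N-(m+1)\varepsilon^m\ge\varepsilon^{\binom{m+1}{2}}N-(m+1)$). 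The repair is harmless: choosing $N>(n-1)\varepsilon^{-\binom{n-1}{2}}$ keeps every $A_m$ nonempty, so the conclusion you actually invoke --- that $N$ can be taken large enough in terms of $n$ and $\varepsilon$ alone --- is correct, and the rest of your proof, including the inductive maintenance of the invariant and the final verification, is sound.
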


\begin{proof}
Apply Theorem~\ref{thm:ramsey} with $k_R=2$, $r_R=n$ and $n_R=\max\{2n,2+\lceil n/\varepsilon\rceil\}$ to get $N$ (the variables
on the left of the equalities are named as in the statement of Theorem~\ref{thm:ramsey} with the subscript $R$ added
to distinguish them from the variables in the statement of the lemma).
Let $H$ be an $N$-partitioned hypergraph and sets $S_{ijk}\subseteq V_{ij}$ be as described in the statement of the lemma.
We construct an auxiliary $2$-edge-coloring of the complete $n$-uniform hypergraph on the vertex set $[N]$ as follows:
an $n$-tuple $a_1<a_2<\cdots<a_n$ is colored blue
if the $n-2$ sets $S_{a_1a_2a_3},S_{a_1a_2a_4},\ldots,S_{a_1a_2a_n}$ have a common vertex, and
it is colored red otherwise.
By Theorem~\ref{thm:ramsey},
there exist $a_1,\ldots,a_{n_R}\in [N]$, $a_1<a_2<\cdots<a_{n_R}$, such that
all $n$-tuples of $a_1,\ldots,a_{n_R}$ have the same color.
We next distinguish two cases depending on the common color of those $n$-tuples.

If the common color of the $n$-tuples is blue,
then we set $I=\{a_1,\ldots,a_{n}\}$ and
let $s_{a_ia_j}$ for $1\le i<j\le n$ be any element contained in the intersection of
the sets $S_{a_ia_ja_{j+1}},S_{a_ia_ja_{j+2}},\ldots,S_{a_ia_ja_{n}}$.

Suppose that the common color for the $n$-tuples is red.
Since each of the sets $S_{a_1a_2a_\ell}$ for $\ell=3,\ldots,2+\lceil n/\varepsilon\rceil$
contains at least $\varepsilon |V_{a_1a_2}|$ elements of $V_{a_1a_2}$ and
the number of choices for $\ell$ is $\lceil n/\varepsilon\rceil$,
there exist an element $s\in V_{a_1a_2}$ and $J\subseteq\{a_3,\ldots,a_{2+\lceil n/\varepsilon\rceil}\}$, $|J|\ge n$, such that
$s\in S_{a_1a_2a}$ for every $a\in J$.
This implies that the $n$-tuple formed by $a_1$, $a_2$ and any $n-2$ elements of $J$ should be blue,
which contradicts that the common color for the $n$-tuples formed by $a_1,\ldots,a_{n_R}$ is red.
\end{proof}

The first of the next two lemmas follows from Lemma~\ref{lem:Rleft} by applying it to the reverse of $H$,
however, for later use it is beneficial to state it explicitly;
the proof of the second lemma follows along the lines of Lemma~\ref{lem:Rleft} and we only include its sketch for completeness.

\begin{lemma}
\label{lem:Rright}
For every $n$ and $\varepsilon>0$, there exists $N$ such that the following holds.
If $H$ is an $N$-partitioned hypergraph and
for each $1\le i<j<k\le N$ a subset $S_{ijk}$ of $V_{jk}$ with at least $\varepsilon |V_{jk}|$ vertices is given,
then there exist a subset $I\subseteq [N]$ of size $n$ and vertices $s_{jk}$, $j,k\in I$, $j<k$, such that
$s_{jk}\in S_{ijk}$ for all $i,j,k\in I$, $i<j<k$.
\end{lemma}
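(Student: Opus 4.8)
The statement to be proved is Lemma~\ref{lem:Rright}, which asserts the analogue of Lemma~\ref{lem:Rleft} with the sets $S_{ijk}$ living inside $V_{jk}$ (the ``right'' pair of the triple) rather than inside $V_{ij}$ (the ``left'' pair). As the paragraph preceding the statement already hints, the natural plan is to deduce it from Lemma~\ref{lem:Rleft} by passing to the reverse of $H$.

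\textbf{The plan.} Given $n$ and $\varepsilon>0$, I would take the $N$ furnished by Lemma~\ref{lem:Rleft} for the same $n$ and $\varepsilon$, and show this $N$ works. Let $H$ be an $N$-partitioned hypergraph with sets $S_{ijk}\subseteq V_{jk}$ of size at least $\varepsilon|V_{jk}|$ as in the hypothesis. Form the reverse $\widehat H$ of $H$, whose vertex parts are $\widehat V_{ij}=V_{N-j+1,N-i+1}$. The key bookkeeping point is that the map $x\mapsto N+1-x$ is an order-reversing bijection of $[N]$: a triple $i<j<k$ in $[N]$ corresponds to the triple $i'<j'<k'$ with $i'=N-k+1$, $j'=N-j+1$, $k'=N-i+1$, and under this correspondence $V_{jk}=\widehat V_{i'j'}$ (since $N-j+1=i'+?$... one checks $\widehat V_{i'j'}=V_{N-j'+1,N-i'+1}=V_{j,k}$) while $V_{ij}=\widehat V_{j'k'}$. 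Thus the ``right pair'' $V_{jk}$ of the triple in $H$ becomes the ``left pair'' $\widehat V_{i'j'}$ of the corresponding triple in $\widehat H$. Accordingly I would define, for each triple $i'<j'<k'$ in $[N]$, the set $\widehat S_{i'j'k'}:=S_{i''j''k''}\subseteq V_{j''k''}=\widehat V_{i'j'}$ where $(i'',j'',k'')=(N-k'+1,N-j'+1,N-i'+1)$; this has at least $\varepsilon|\widehat V_{i'j'}|$ vertices.

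\textbf{Applying Lemma~\ref{lem:Rleft} and translating back.} Now Lemma~\ref{lem:Rleft} applied to $\widehat H$ with the sets $\widehat S_{i'j'k'}$ yields an index set $\widehat I\subseteq[N]$, $|\widehat I|=n$, and vertices $\widehat s_{i'j'}\in\widehat V_{i'j'}$ for $i',j'\in\widehat I$, $i'<j'$, such that $\widehat s_{i'j'}\in\widehat S_{i'j'k'}$ for all $i'<j'<k'$ in $\widehat I$. Let $I:=\{N+1-x:x\in\widehat I\}$, also of size $n$. For $j<k$ in $I$, set $s_{jk}:=\widehat s_{i'j'}$ where $i'=N-k+1$ and $j'=N-j+1$ are the images in $\widehat I$; note $s_{jk}\in\widehat V_{i'j'}=V_{jk}$, as required. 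Finally, for any $i<j<k$ in $I$, the images $i'=N-k+1<j'=N-j+1<k'=N-i+1$ lie in $\widehat I$, and the guarantee $\widehat s_{j'k'}\in\widehat S_{i'j'k'}$ — wait, here I must be careful about which pair: for the triple $i<j<k$ in $H$, the relevant vertex is $s_{jk}$, whose images are $(i',j')=(N-k+1,N-j+1)$, and these are exactly the \emph{first two} (smallest two) indices of the image triple, so $s_{jk}=\widehat s_{i'j'}\in\widehat S_{i'j'k'}=S_{ijk}$ by the defining property of $\widehat s$ in Lemma~\ref{lem:Rleft}. This is precisely the conclusion.

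\textbf{Main obstacle.} There is no genuine obstacle here; the content is entirely in Lemma~\ref{lem:Rleft}. The only thing that requires care — and the one place an error could creep in — is the index arithmetic of the reversal: verifying that under $x\mapsto N+1-x$ the left pair and right pair swap roles while the ``free'' index (the one being varied, which in Lemma~\ref{lem:Rleft} is the largest index $k$) maps to the largest index of the image triple, so that the quantifier ``for all $i,j,k\in I$, $i<j<k$'' transforms correctly. I would state the correspondence $V_{jk}=\widehat V_{N-k+1,N-j+1}$ explicitly once and then let it carry the whole argument; the rest is a one-line invocation of Lemma~\ref{lem:Rleft}. (The sketch of the ``second lemma'' alluded to in the same paragraph, presumably Lemma~\ref{lem:Rtop} about top pairs, would instead rerun the proof of Lemma~\ref{lem:Rleft} verbatim, replacing $V_{ij}$ by $V_{ik}$ throughout and coloring an $n$-tuple blue according to whether the sets $S_{a_1 a_i a_n}$ over the middle indices share a common vertex — but that is a separate statement from the one at hand.)
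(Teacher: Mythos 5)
Your proof is correct and follows exactly the route the paper indicates: it states that Lemma~\ref{lem:Rright} ``follows from Lemma~\ref{lem:Rleft} by applying it to the reverse of $H$'' without writing out the index arithmetic, and your argument is a careful, correct expansion of precisely that reduction (including the key observation that under $x\mapsto N+1-x$ the right pair $V_{jk}$ of a triple becomes the left pair $\widehat V_{i'j'}$ of the image triple).
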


\begin{lemma}
\label{lem:Rtop}
For every $n$ and $\varepsilon>0$, there exists $N$ such that the following holds.
If $H$ is an $N$-partitioned hypergraph and
for each $1\le i<j<k\le N$ a subset $S_{ijk}$ of $V_{ik}$ with at least $\varepsilon |V_{ik}|$ vertices is given,
then there exist a subset $I\subseteq [N]$ of size $n$ and vertices $s_{ik}$, $i,k\in I$, $i<k$, such that
$s_{ik}\in S_{ijk}$ for all $i,j,k\in I$, $i<j<k$.
\end{lemma}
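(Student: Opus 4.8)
The plan is to mimic the proof of Lemma~\ref{lem:Rleft} almost verbatim, adjusting only the index on which the sets $S_{ijk}$ are parametrized. Recall that in Lemma~\ref{lem:Rleft} the sets $S_{ijk}$ live in $V_{ij}$ and for a fixed pair $a_1<a_2$ we varied the \emph{third, largest} index $a_\ell$; here the sets live in $V_{ik}$, so we should fix the pair $(i,k)$ — which plays the role of $(a_1, a_3)$ in a triple $a_1 < a_2 < a_3$ — and vary the \emph{middle} index $j$. Concretely, first I would apply Theorem~\ref{thm:ramsey} with $k_R=2$, $r_R=n$ and $n_R=\max\{2n,\,2+\lceil n/\varepsilon\rceil\}$ to obtain $N$, exactly as before.

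Next I would set up the auxiliary $2$-edge-coloring of the complete $n$-uniform hypergraph on $[N]$: an $n$-tuple $a_1 < a_2 < \cdots < a_n$ is colored \emph{blue} if the $n-2$ sets $S_{a_1 a_2 a_n}, S_{a_1 a_3 a_n}, \ldots, S_{a_1 a_{n-1} a_n}$ (all subsets of $V_{a_1 a_n}$, obtained by varying the middle index over $a_2,\ldots,a_{n-1}$) have a common vertex, and \emph{red} otherwise. Apply Ramsey to get $a_1 < \cdots < a_{n_R}$ all of whose $n$-tuples receive the same color. In the blue case, set $I=\{a_1,\ldots,a_n\}$ and for each $i<k$ in $I$ let $s_{a_i a_k}$ be any vertex in the common intersection of $S_{a_i a_{i+1} a_k}, S_{a_i a_{i+2} a_k}, \ldots, S_{a_i a_{k-1} a_k}$ (which is nonempty since the $n$-tuple $\{a_i,\ldots,a_k\}$ extended by enough larger indices is blue); then $s_{a_i a_k}\in S_{a_i a_j a_k}$ for all $i<j<k$ in $I$, as required. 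In the red case, fix the outer pair $(a_1, a_{n_R})$ and consider the $\lceil n/\varepsilon\rceil$ sets $S_{a_1 a_\ell a_{n_R}}$ for $\ell=2,\ldots,1+\lceil n/\varepsilon\rceil$, each of size at least $\varepsilon|V_{a_1 a_{n_R}}|$; by averaging (pigeonhole on $V_{a_1 a_{n_R}}$) some vertex $s$ lies in at least $n$ of them, say for indices in a set $J$ of size $\ge n$, so the $n$-tuple consisting of $a_1$, $n-2$ elements of $J$, and $a_{n_R}$ would be blue — contradiction.

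The only genuinely new bookkeeping point — and hence the step I would be most careful about — is making sure the index ranges line up: for a blue $n$-tuple $a_1<\cdots<a_n$ the "witnessed'' sets are $S_{a_1 a_j a_n}$ with $a_j$ ranging over the middle elements, and when later extracting $s_{a_i a_k}$ for an \emph{arbitrary} pair $i<k$ inside $I$ we must invoke blueness of an $n$-tuple whose smallest element is $a_i$ and largest is $a_k$; since $|I|=n$ there may not be $n-2$ indices strictly between them, so as in Lemma~\ref{lem:Rleft} we actually work inside the larger set $\{a_1,\ldots,a_{n_R}\}$ with $n_R\ge 2n$ and pick the $n-2$ middle indices from the available pool above $a_i$ and below (or equal to, then padded above) $a_k$ — the reverse/symmetry bookkeeping is slightly fiddlier here than in the left case because the varying index sits in the middle rather than at the end, but the argument is otherwise identical. (Indeed, unlike Lemma~\ref{lem:Rright}, this lemma does \emph{not} follow from Lemma~\ref{lem:Rleft} by reversing $H$, since reversal sends the "top'' pair $V_{ik}$ to a top pair, not a left pair, which is precisely why a separate proof is needed.) I would then remark that the details are routine and omit them, consistent with the paper's stated intention to include only a sketch.
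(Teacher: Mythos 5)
Your setup of the Ramsey coloring is correct and so is the red case, and you are right that this lemma is \emph{not} a reversal of Lemma~\ref{lem:Rleft}; but the handling of the blue case has a genuine gap, and it is exactly at the spot you flag as ``fiddly.'' You take $n_R=\max\{2n,\,2+\lceil n/\varepsilon\rceil\}$ and $I=\{a_1,\ldots,a_n\}$ and then need $s_{a_ia_k}\in\bigcap_{i<j<k}S_{a_ia_ja_k}$. To certify this via blueness you need an $n$-tuple $b_1<\cdots<b_n$ with $b_1=a_i$, $b_n=a_k$ and $\{a_{i+1},\ldots,a_{k-1}\}\subseteq\{b_2,\ldots,b_{n-1}\}$; all of $b_2,\ldots,b_{n-1}$ must lie strictly between $a_i$ and $a_k$ in the full sequence, but between consecutive $a$'s there is nothing, so between $a_i$ and $a_k$ there are only $k-i-1<n-2$ available elements whenever $k-i<n-1$. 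The ``padded above $a_k$'' fix you sketch does not work: if you add indices above $a_k$ then $a_k$ becomes a middle element of the $n$-tuple and blueness now speaks about sets of the form $S_{a_i\,\cdot\,a_m}$ for the new top index $a_m>a_k$, which says nothing about $S_{a_i\,\cdot\,a_k}$. This is precisely the structural difference from Lemma~\ref{lem:Rleft}: there the varying index is the last one, so you may freely pad on the right; here the varying index is squeezed between two fixed endpoints.

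The paper's remedy, which you should adopt, is to take $n_R=\max\{n^2,\,2+\lceil n/\varepsilon\rceil\}$ and set $I=\{a_1,a_{n+1},a_{2n+1},\ldots,a_{n^2-n+1}\}$, i.e.\ a ``spread out'' arithmetic selection. Then between any two chosen indices $a_{pn+1}<a_{qn+1}$ there are $(q-p)n-1\ge n-1$ elements of the ambient sequence, enough to accommodate the at most $n-2$ members of $I$ strictly between them together with fillers to complete an $n$-tuple whose first and last entries are the chosen pair. With that one change the blue case goes through, and the red case you wrote is already correct.
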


\begin{proof}
As we have mentioned, we only sketch the proof as it follows the lines of the proof of Lemma~\ref{lem:Rleft}.
We first apply Theorem~\ref{thm:ramsey} with $k_R=2$, $r_R=n$ and $n_R=\max\{n^2,2+\lceil n/\varepsilon\rceil\}$ to get $N$.
Suppose that an $N$-partitioned hypergraph $H$ and sets $S_{ijk}\subseteq V_{ik}$ are given.
We construct an auxiliary $2$-edge-coloring of the complete $n$-uniform hypergraph on the vertex set $[N]$ as follows:
an $n$-tuple $a_1<a_2<\cdots<a_n$ is colored blue
if the $n-2$ sets $S_{a_1a_2a_n},S_{a_1a_3a_n},\ldots,S_{a_1a_{n-1}a_n}$ have a common vertex, and
it is colored red otherwise.
By Theorem~\ref{thm:ramsey},
there exist $a_1,\ldots,a_{n_R}\in [N]$, $a_1<a_2<\cdots<a_{n_R}$, such that
all $n$-tuples of $a_1,\ldots,a_{n_R}$ have the same color.
If the common color is blue, we set $I=\{a_1,a_{n+1},\ldots,a_{n^2-n+1}\}$.
If the common color is red,
we argue as in the proof of Lemma~\ref{lem:Rleft} that
there is an element $s\in V_{a_1a_{\lceil n/\varepsilon\rceil+2}}$ contained
in at least $n$ sets $S_{a_1a_{\ell}a_{\lceil n/\varepsilon\rceil+2}}$
where $\ell$ ranges between $2$ and $\lceil n/\varepsilon\rceil+1$.
Hence, the $n$-tuple formed by $a_1$, $a_{\lceil n/\varepsilon\rceil+2}$ and $n-2$ choices of $\ell$ with this property
should be blue, which contradicts that the common color of the $n$-tuples formed by $a_1,\ldots,a_{n_R}$ is red.
\end{proof}

We now extend Lemmas~\ref{lem:Rleft}--\ref{lem:Rtop} to the setting needed to prove Lemma~\ref{lem:main}.

\begin{lemma}
\label{lem:Rhorizontal}
For every $n$ and $\varepsilon>0$, there exists $N$ such that the following holds.
If $H$ is an $N$-partitioned hypergraph and
for each $1\le i<j<k\le N$ subsets $S_{ijk}$ of $V_{ij}$ and $S'_{ijk}$ of $V_{jk}$ are given such that
the intersection $S'_{k'ij}\cap S_{ijk}$ has at least $\varepsilon |V_{ij}|$ elements for all $1\le k'<i<j<k\le N$,
then there exists a subset $I\subseteq [N]$ of size $n$ and vertices $s_{ij}$, $i,j\in I$, $i<j$, such that
$s_{ij}\in S'_{k'ij}$ and $s_{ij}\in S_{ijk}$ for all $k'<i<j<k$ with $i,j,k,k'\in I$.
\end{lemma}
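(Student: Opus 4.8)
The plan is to adapt the proof of Lemma~\ref{lem:Rleft}. The new difficulty is that for a pair $(i,j)$ one must now simultaneously control the sets $S'_{k'ij}\subseteq V_{ij}$ with $k'<i$ and the sets $S_{ijk}\subseteq V_{ij}$ with $k>j$, so we shall place the pair in the \emph{middle} of a Ramsey tuple in order to leave room on both sides. Put $q_0=n-1$ and $m=2n-2$; for $n\le 3$ the statement is trivial (there are no four indices of $I$ to order), so assume $n\ge 4$, and note that we may assume every part $V_{ij}$ is nonempty. Apply Theorem~\ref{thm:ramsey} with $k_R=2$, $r_R=m$, and $n_R=\max\{3n,\lceil 9n/\varepsilon\rceil\}$ to get $N$, and colour the complete $m$-uniform hypergraph on $[N]$ by declaring an $m$-tuple $c_1<\dots<c_m$ \emph{blue} if
\[
\Bigl(\bigcap_{\ell=1}^{q_0-1}S'_{c_\ell c_{q_0}c_{q_0+1}}\Bigr)\cap\Bigl(\bigcap_{\ell=q_0+2}^{m}S_{c_{q_0}c_{q_0+1}c_\ell}\Bigr)\neq\emptyset,
\]
and \emph{red} otherwise (every set appearing here is a subset of $V_{c_{q_0}c_{q_0+1}}$). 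Theorem~\ref{thm:ramsey} produces a monochromatic set $a_1<a_2<\dots<a_{n_R}$.

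Suppose first that this colour is blue, and set $I=\{a_{n-1},a_n,\dots,a_{2n-2}\}$, so $I=\{b_1<\dots<b_n\}$ with $b_p=a_{n-2+p}$. For a pair $b_p<b_q$ of $I$, build the $m$-tuple consisting of $b_p$, $b_q$, a block of $q_0-1$ Ramsey elements smaller than $b_p$ that contains $b_1,\dots,b_{p-1}$ (padded with elements of $\{a_1,\dots,a_{n-2}\}$), and a block of $m-q_0-1=q_0-1$ Ramsey elements larger than $b_q$ that contains $b_{q+1},\dots,b_n$ (padded with elements of $\{a_{2n-1},\dots,a_{n_R}\}$); the bound $n_R\ge 3n$ ensures enough padding is available. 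In this tuple $b_p$ and $b_q$ occupy positions $q_0$ and $q_0+1$, so its blue colour yields a vertex $s_{b_pb_q}\in V_{b_pb_q}$ that lies in $S'_{b_{p'}b_pb_q}$ for every $p'<p$ and in $S_{b_pb_qb_k}$ for every $k>q$---exactly the conclusion required.

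Ruling out the red colour is the core of the argument. Assume every $m$-tuple of $a_1<\dots<a_{n_R}$ is red, and fix $p=\lceil n_R/2\rceil$, so that $(p-1)(n_R-p-1)\ge n_R^2/9$ and both factors are at least $q_0-1$. Write $V=V_{a_pa_{p+1}}$, and for $v\in V$ let $d'(v)$ be the number of $\ell<p$ with $v\in S'_{a_\ell a_pa_{p+1}}$ and $d(v)$ the number of $\ell>p+1$ with $v\in S_{a_pa_{p+1}a_\ell}$. Since $|S'_{a_\ell a_pa_{p+1}}\cap S_{a_pa_{p+1}a_{\ell'}}|\ge\varepsilon|V|$ for all $\ell<p<p+1<\ell'$, double counting gives $\sum_{v\in V}d'(v)d(v)\ge(p-1)(n_R-p-1)\varepsilon|V|$. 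On the other hand, if no $v$ satisfied both $d'(v)\ge q_0-1$ and $d(v)\ge q_0-1$, then for every $v$ one of $d'(v)\le q_0-2$, $d(v)\le q_0-2$ holds, so bounding $d'(v)d(v)$ by $(q_0-2)d(v)$ in the first case and by $(q_0-2)d'(v)$ in the second, and using $\sum_v d'(v)\le(p-1)|V|$ and $\sum_v d(v)\le(n_R-p-1)|V|$, we would get $\sum_v d'(v)d(v)\le(q_0-2)(n_R-2)|V|$. Since $(p-1)(n_R-p-1)\ge n_R^2/9$ and $q_0-2<n$, comparing the two bounds forces $n_R<9n/\varepsilon$, contradicting the choice of $n_R$. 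Hence some $v$ has $d'(v)\ge q_0-1$ and $d(v)\ge q_0-1$; selecting $q_0-1$ witnessing indices $\ell<p$, $q_0-1$ witnessing indices $\ell'>p+1$, and forming the $m$-tuple consisting of these together with $a_p,a_{p+1}$ produces a blue $m$-tuple, a contradiction. Thus the monochromatic colour is blue and the lemma follows.

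I expect this final step---upgrading the pairwise information that each $S'_{k'ij}\cap S_{ijk}$ is large to a single vertex lying in many of the left sets and many of the right sets at once---to be the main obstacle; it is the analogue of the pigeonhole step in the proof of Lemma~\ref{lem:Rleft}, but now it uses the double-counting estimate on $d'$ and $d$ above. The reverse-order companion of this lemma (playing the role of Lemma~\ref{lem:Rright} in this setting) follows by applying it to the reverse of $H$, and together with Lemmas~\ref{lem:Rleft}--\ref{lem:Rtop} these statements are exactly what is needed to handle the two possible outcomes of Lemma~\ref{lem:density} in the proof of Lemma~\ref{lem:main}.
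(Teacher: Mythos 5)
Your proof is correct, and the Ramsey framework — colour a $(2n-2)$-tuple $c_1<\cdots<c_{2n-2}$ blue exactly when the $n-2$ sets $S'_{c_\ell c_{n-1}c_n}$ (for $\ell\le n-2$) and the $n-2$ sets $S_{c_{n-1}c_nc_\ell}$ (for $\ell\ge n+1$) have a common vertex in $V_{c_{n-1}c_n}$, take $I=\{a_{n-1},\ldots,a_{2n-2}\}$ in the blue case — coincides with the paper's. The only genuine difference is how you rule out the red colour. The paper pairs a left index $a_\ell$ with the right index $a_{m+2+\ell}$ \emph{using the same $\ell$}: with $m=\lceil n/\varepsilon\rceil$, it forms the $m$ intersections $S'_{a_\ell a_{m+1}a_{m+2}}\cap S_{a_{m+1}a_{m+2}a_{m+2+\ell}}$, each of density at least $\varepsilon$ by hypothesis, and a one-dimensional pigeonhole finds a vertex $s$ in $n-2$ of them; the corresponding $\ell$'s together with $a_{m+1},a_{m+2},a_{m+2+\ell}$ then form a blue tuple. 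You instead keep the left and right families uncoupled, introduce the degree quantities $d'(v)$ and $d(v)$, and lower-bound $\sum_v d'(v)d(v)$ by double counting over \emph{all} pairs $(\ell,\ell')$; the resulting quadratic bound forces some $v$ to satisfy both $d'(v)\ge n-2$ and $d(v)\ge n-2$. Both arguments are correct. The paper's pairing is slicker and needs $n_R$ only around $2n/\varepsilon$ rather than your $9n/\varepsilon$, but your version requires no clever matching of indices, which is arguably more robust (and makes transparent why the hypothesis has to be about intersections $S'_{k'ij}\cap S_{ijk}$ rather than about $S'$ and $S$ separately). The padding bookkeeping in your blue case matches what the paper does implicitly, and your handling of the degenerate cases $n\le 3$ is fine.
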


\begin{proof}
Apply Theorem~\ref{thm:ramsey} with $k_R=2$, $r_R=2n-2$ and $n_R=\max\{3n,2+2\lceil n/\varepsilon\rceil\}$ to get $N$.
Let $H$ be an $N$-partitioned hypergraph and sets $S_{ijk},S'_{ijk}\subseteq V_{ij}$ as described in the statement of the lemma.
We construct an auxiliary $2$-edge-coloring of the complete $(2n-2)$-uniform hypergraph on the vertex set $[N]$ as follows:
an $(2n-2)$-tuple $a_1<a_2<\cdots<a_{2n-2}$ is colored blue
if the $n-2$ sets $S'_{a_1a_{n-1}a_n},S'_{a_2a_{n-1}a_n},\ldots,S'_{a_{n-2}a_{n-1}a_n}$ and
the $n-2$ sets $S_{a_{n-1}a_na_{n+1}},S_{a_{n-1}a_na_{n+2}},\ldots,S_{a_{n-1}a_na_{2n-2}}$
have a common vertex, and
it is colored red otherwise.
By Theorem~\ref{thm:ramsey},
there exist $a_1,\ldots,a_{n_R}\in [N]$, $a_1<a_2<\cdots<a_{n_R}$, such that
all $(2n-2)$-tuples of $a_1,\ldots,a_{n_R}$ have the same color.
We next distinguish two cases depending on the common color of these $(2n-2)$-tuples.

If the common color of the $(2n-2)$-tuples is blue,
we set $I=\{a_{n-1},\ldots,a_{2n-2}\}$ and
let $s_{a_ia_j}$ for $n-1\le i<j\le 2n-2$ be any element contained in the intersection of
the sets $S'_{a_{i-(n-2)}a_ia_j},\ldots,S'_{a_{i-1}a_ia_j}$ and $S_{a_ia_ja_{j+1}},\ldots,S_{a_ia_ja_{j+n-2}}$.

Suppose that the common color for the $(2n-2)$-tuples is red.
Since each of the $m:=\lceil n/\varepsilon\rceil$ many sets $S'_{a_{\ell}a_{m+1}a_{m+2}}\cap S_{a_{m+1}a_{m+2}a_{m+2+\ell}}$ for $\ell=1,\ldots,m$
contains at least $\varepsilon |V_{a_{m+1}a_{m+2}}|$ elements of $V_{a_{m+1}a_{m+2}}$,
there exist an element $s\in V_{a_{m+1}a_{m+2}}$ and $J\subseteq\{1,\ldots,m\}$, $|J|=n-2$, such that
$s\in S'_{a_{\ell}a_{m+1}a_{m+2}}\cap S_{a_{m+1}a_{m+2}a_{m+2+\ell}}$ for every $\ell\in J$,
i.e.,
$s\in S'_{a_{\ell}a_{m+1}a_{m+2}}$ and
$s\in S_{a_{m+1}a_{m+2}a_{m+2+\ell}}$ for every $\ell\in J$.
It follows that
the $(2n-2)$-tuple formed by the indices $a_{m+1}$, $a_{m+2}$,
$a_{\ell}$ and $a_{m+2+\ell}$, $\ell\in J$, should be colored blue.
This contradicts that the common color for the $(2n-2)$-tuples formed by elements of $I$ is red.
\end{proof}

The proof of the next lemma follows along the lines of the proof of Lemma~\ref{lem:Rhorizontal}
but since it is not completely analogous, we decided to include its sketch for completeness.

\begin{lemma}
\label{lem:Rvertical}
For every $n$ and $\varepsilon>0$, there exists $N$ such that the following holds.
If $H$ is an $N$-partitioned hypergraph and
for each $1\le i<j<k\le N$ subsets $S_{ijk}$ of $V_{ij}$ and $S'_{ijk}$ of $V_{ik}$ are given such that
the intersection $S'_{ik'j}\cap S_{ijk}$ has at least $\varepsilon |V_{ij}|$ elements for all $1\le i<k'<j<k\le N$,
then there exists a subset $I\subseteq [N]$ of size $n$ and vertices $s_{ij}$, $i,j\in I$, $i<j$, such that
$s_{ij}\in S'_{ik'j}$ and $s_{ij}\in S_{ijk}$ for all $i<k'<j<k$ with $i,j,k,k'\in I$.
\end{lemma}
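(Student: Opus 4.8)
\emph{Proof proposal.}
The plan is to run the same argument as for Lemma~\ref{lem:Rhorizontal}, with two changes forced by the fact that here the pair $(i,j)$ is flanked by the ``top'' indices $k'$ with $i<k'<j$ on one side and by the ``left'' indices $k$ with $j<k$ on the other: the designated pair must occupy the first and $n$-th coordinates of the Ramsey tuple (not two adjacent coordinates), and the monochromatic index set extracted at the end must be spread out in the list rather than consecutive. Concretely, I would first apply Theorem~\ref{thm:ramsey} with $k_R=2$, $r_R=2n-2$ and $n_R=\max\{n^2,\,2+2\lceil n/\varepsilon\rceil\}$ to get $N$, and set $m=\lceil n/\varepsilon\rceil$. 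Given $H$ and the sets $S_{ijk}\subseteq V_{ij}$ and $S'_{ijk}\subseteq V_{ik}$, I would $2$-colour the complete $(2n-2)$-uniform hypergraph on $[N]$ by calling a tuple $a_1<\dots<a_{2n-2}$ \emph{blue} if some vertex of $V_{a_1a_n}$ lies in all the sets $S'_{a_1a_\ell a_n}$ for $\ell=2,\dots,n-1$ and in all the sets $S_{a_1a_na_\ell}$ for $\ell=n+1,\dots,2n-2$, and \emph{red} otherwise; thus $a_1,a_n$ play the roles of $i,j$, the indices $a_2,\dots,a_{n-1}$ are candidates for the top indices $k'$, and the indices $a_{n+1},\dots,a_{2n-2}$ are candidates for the left indices $k$. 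Theorem~\ref{thm:ramsey} yields a monochromatic set $a_1<\dots<a_{n_R}$.

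If the common colour is blue, I would take $I$ to be an $n$-element subset of $\{a_1,\dots,a_{n_R}\}$ any two of whose members differ by at least $n-1$ in the list, chosen so that at least $n-2$ list elements lie above the largest member of $I$; the bound $n_R\ge n^2$ makes this possible, for instance with $I=\{a_1,a_n,a_{2n-1},\dots,a_{(n-1)^2+1}\}$. For a pair $a_p<a_q$ in $I$, I would fill the $n-2$ coordinates between positions $1$ and $n$ of a $(2n-2)$-tuple with list elements strictly between $a_p$ and $a_q$ in such a way that they include every member of $I$ lying strictly between $a_p$ and $a_q$ (possible because $q-p\ge n-1$ leaves at least $n-2$ list elements in that range), and fill the last $n-2$ coordinates with list elements above $a_q$ including every member of $I$ above $a_q$ (possible because at most $n-2$ members of $I$, but at least $n-2$ list elements, lie above $a_q$). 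The resulting tuple is blue, and I would set $s_{a_pa_q}$ to be a witnessing vertex; then $s_{a_pa_q}\in S'_{a_pk'a_q}$ for every $k'\in I$ with $a_p<k'<a_q$ and $s_{a_pa_q}\in S_{a_pa_qk}$ for every $k\in I$ with $k>a_q$, which is exactly the conclusion of the lemma.

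If the common colour is red, I would derive a contradiction as in Lemma~\ref{lem:Rhorizontal}. Using the hypothesis with $i=a_1$, $j=a_{m+2}$, $k'=a_{\ell+1}$ and $k=a_{m+2+\ell}$ for $\ell=1,\dots,m$, each of the $m$ sets $S'_{a_1a_{\ell+1}a_{m+2}}\cap S_{a_1a_{m+2}a_{m+2+\ell}}$ is a subset of $V_{a_1a_{m+2}}$ of size at least $\varepsilon|V_{a_1a_{m+2}}|$, so by averaging some vertex of $V_{a_1a_{m+2}}$ lies in at least $m\varepsilon\ge n$ of them; choosing a set $J$ of $n-2$ of the corresponding indices $\ell$, the $(2n-2)$-tuple consisting of $a_1$, the $a_{\ell+1}$ with $\ell\in J$, $a_{m+2}$, and the $a_{m+2+\ell}$ with $\ell\in J$ is increasing and is witnessed to be blue by that vertex, contradicting that every $(2n-2)$-tuple is red.

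The one point needing care beyond transcribing Lemma~\ref{lem:Rhorizontal} is the blue-case bookkeeping: because of the top indices the designated pair cannot be kept adjacent inside the Ramsey tuple, and the final index set $I$ cannot be taken consecutive, since between two consecutive members of a consecutive $I$ there would be no list elements left to serve as the top indices $k'$. Making the index bounds work out (the choice $n_R\ge n^2$ for the spread-out $I$, and $n_R\ge 2+2\lceil n/\varepsilon\rceil$ for the averaging step) is then routine.
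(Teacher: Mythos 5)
Your proof is correct and follows essentially the same route as the paper's sketch: the same Ramsey parameters, the same $2$-colouring of the $(2n-2)$-sets with the designated pair occupying positions $1$ and $n$ (with top-index candidates in between and left-index candidates after), a spread-out $n$-element $I$ in the blue case (your $I=\{a_{j(n-1)+1}\}_{j=0}^{n-1}$ versus the paper's $I=\{a_1,a_{n+1},\ldots,a_{n^2-n+1}\}$; both fit within $n_R\geq n^2$ and both leave enough list elements between and above members of $I$), and the same averaging argument for the red case. One minor remark: the paper's sketch writes the second batch of sets in the blue condition as $S_{a_{n-1}a_na_{n+1}},\ldots,S_{a_{n-1}a_na_{2n-2}}$, which lie in $V_{a_{n-1}a_n}$ rather than $V_{a_1a_n}$ and so could not share a vertex with the first batch; this is evidently a slip (carried over from Lemma~\ref{lem:Rhorizontal}, where the pair sits in positions $n-1,n$), and your version $S_{a_1a_na_\ell}$ is the intended one.
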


\begin{proof}
First apply Theorem~\ref{thm:ramsey} with $k_R=2$, $r_R=2n-2$ and $n_R=\max\{n^2,2+2\lceil n/\varepsilon\rceil\}$ to get $N$.
Consider an $N$-partitioned hypergraph $H$ and sets $S_{ijk}$ and $S'_{ijk}$ as given in the statement.
We construct an auxiliary $2$-edge-coloring of the complete $(2n-2)$-uniform hypergraph on the vertex set $[N]$ as follows:
an $(2n-2)$-tuple $a_1<a_2<\cdots<a_{2n-2}$ is colored blue
if the $n-2$ sets $S'_{a_1a_2a_n},S'_{a_1a_3a_n},\ldots,S'_{a_1a_{n-1}a_n}$ and
the $n-2$ sets $S_{a_{n-1}a_na_{n+1}},S_{a_{n-1}a_na_{n+2}},\ldots,S_{a_{n-1}a_na_{2n-2}}$
have a common vertex, and
it is colored red otherwise.
By Theorem~\ref{thm:ramsey},
we get $a_1,\ldots,a_{n_R}\in [N]$, $a_1<a_2<\cdots<a_{n_R}$, such that 
all $(2n-2)$-tuples of $a_1,\ldots,a_{n_R}$ have the same color.
If the common color of the $(2n-2)$-tuples is blue,
we set $I=\{a_{1},a_{n+1},\ldots,a_{n^2-n+1}\}$;
the existence of $s_{ij}$ follows as all $(2n-2)$-tuples are blue.
Suppose that the common color for the $(2n-2)$-tuples is red.
Similarly to the proof of Lemma~\ref{lem:Rhorizontal},
we consider intersections $S'_{a_1a_{1+\ell}a_{\lceil n/\varepsilon\rceil+2}}\cap S_{a_1a_{\lceil n/\varepsilon\rceil+2}a_{\lceil n/\varepsilon\rceil+\ell+2}}$
where $\ell$ ranges between $1$ and $\lceil n/\varepsilon\rceil$ and
argue that there exist $n$ of these intersection that have a vertex in common;
this implies that one of $(2n-2)$-tuples should be blue.
\end{proof}

To prove Lemma~\ref{lem:main}, we need an additional auxiliary lemma.

\begin{lemma}
\label{lem:neighbors}
The following holds for every tripartite hypergraph $G$ with parts $A$, $B$ and $C$ and every $\varepsilon>0$.
If a vertex $a$ of $A$ is contained in at least $\varepsilon |B|\cdot |C|$ edges of $G$,
then there exist at least $\varepsilon |B|/2$ vertices $b$ of $B$ such that
$a$ and $b$ are contained together in at least $\varepsilon |C|/2$ edges of $G$.
\end{lemma}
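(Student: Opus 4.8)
The plan is to prove Lemma~\ref{lem:neighbors} by a direct double-counting argument over the vertices of $B$; there is no real obstacle here, as the statement is an entirely routine consequence of counting the edges through $a$ in two ways, and I expect the whole proof to occupy only a few lines.

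First I would introduce, for each vertex $b\in B$, the quantity $d(b)$ equal to the number of edges of $G$ containing both $a$ and $b$. Since every such edge uses exactly one vertex of $C$, we have $d(b)\le |C|$ for all $b$, and summing over $b$ counts precisely the edges through $a$, so the hypothesis gives $\sum_{b\in B}d(b)\ge\varepsilon|B|\cdot|C|$. Next I would let
\[
X=\Bigl\{\,b\in B:\ d(b)\ge\tfrac{\varepsilon}{2}|C|\,\Bigr\},
\]
which is exactly the set whose size we must bound from below. Splitting the sum according to membership in $X$, and bounding $d(b)\le|C|$ for $b\in X$ and $d(b)<\tfrac{\varepsilon}{2}|C|$ for $b\notin X$, I obtain
\[
\varepsilon|B|\cdot|C|\ \le\ \sum_{b\in B}d(b)\ \le\ |X|\cdot|C|+\bigl(|B|-|X|\bigr)\cdot\tfrac{\varepsilon}{2}|C|\ \le\ |X|\cdot|C|+\tfrac{\varepsilon}{2}|B|\cdot|C|.
\]

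Rearranging yields $|X|\cdot|C|\ge\tfrac{\varepsilon}{2}|B|\cdot|C|$; dividing by $|C|$ (the case $|C|=0$ is vacuous, since then $a$ lies in no edge and the hypothesis fails) gives $|X|\ge\tfrac{\varepsilon}{2}|B|$, which is exactly the assertion of the lemma. The only minor point to keep in mind is this degenerate case $|C|=0$, which I would dispose of in one sentence at the start; everything else is the averaging estimate above.
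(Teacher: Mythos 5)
Your proof is correct and is essentially the same double-counting of edges through $a$ as the paper's; the only cosmetic difference is that you derive the bound directly by rearranging the inequality, whereas the paper phrases the identical estimate as a proof by contradiction assuming $|B'|<\varepsilon|B|/2$.
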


\begin{proof}
Let $B'\subseteq B$ be the subset of vertices $b$ of $B$ which
are contained together with $a$ in at least $\varepsilon |C|/2$ edges of $G$.
If $|B'|<\varepsilon |B|/2$,
then there are less than $\varepsilon |B|\cdot |C|/2$ edges containing the vertex $a$ and a vertex $b\in B'$.
Since any vertex $b\in B\setminus B'$ is contained together with $a$ in less than $\varepsilon |C|/2$ edges of $G$,
the number of edges containing the vertex $a$ is less than $\varepsilon |B|\cdot |C|$,
which contradicts the assumption of the lemma.
\end{proof}

We are now ready to prove the embedding lemma,
which is the main result of this section.
The lemma will be used to upper bound the uniform Tur\'an density of hypergraphs constructed in the next section.

\begin{lemma}
\label{lem:main}
For every $\delta>0$ and $n\in\NN$,
there exists $N\in\NN$ such that the following holds.
For every $N$-partitioned hypergraph $H$ with density at least $1/27+\delta$,
there exists an $n$-partitioned induced subhypergraph $H'$ of $H$ and
vertices $\alpha_{ij},\beta_{ij},\gamma_{ij},\beta'_{ij},\gamma'_{ij}\in V_{ij}$ for all $1\le i<j\le n$ such that
$\{\alpha_{ij},\beta_{jk},\gamma_{ik}\}$ is an edge of $H'$ for all $1\le i<j<k\le n$ and
at least one of the following holds:
\begin{itemize}
\item For all $1\le i<j<k\le n$, $\{\beta_{ij},\beta'_{jk},\gamma'_{ik}\}$ is an edge of $H'$.
\item For all $1\le i<j<k\le n$, $\{\gamma_{ij},\beta'_{jk},\gamma'_{ik}\}$ is an edge of $H'$.
\item For all $1\le i<j<k\le n$, $\{\beta'_{ij},\gamma_{jk},\gamma'_{ik}\}$ is an edge of $H'$.
\end{itemize}
\end{lemma}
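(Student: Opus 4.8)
The plan is to apply Lemma~\ref{lem:density} and then, in each of its three alternatives, to produce the required vertices in three stages: first a single ``glue'' vertex in each part, which will carry both families of edges; then the first family $\{\alpha_{ij},\beta_{jk},\gamma_{ik}\}$, built around the glue vertices; and finally the second family, also built around the glue vertices --- this last stage works because once the glue vertices are fixed the two families no longer interact. Each stage uses a bounded number of applications of Lemmas~\ref{lem:Rleft}--\ref{lem:neighbors}, and since these lemmas only require the number of parts to be large enough, the quantitative bookkeeping is routine: fix the target numbers of parts from the innermost stage outwards and apply the lemmas in reverse order, with $N$ coming out of Lemma~\ref{lem:density} for the given $\delta$ together with two extra ``buffer'' parts (see below). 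We may also assume $2\varepsilon\le 1/27$, decreasing the $\varepsilon$ provided by Lemma~\ref{lem:density} if necessary.

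\textbf{Horizontal alternative.} Here the glue symbol is $\beta$: a vertex $\beta_{ij}$ serves both as the right vertex $\beta_{jk}$ of a first-family edge and as the left vertex $\beta_{ij}$ of a second-family edge, so it must satisfy $d_{ij\to k'}(\beta_{ij})\ge\varepsilon$ for every smaller index $k'$ and $d_{ij\to k}(\beta_{ij})\ge\varepsilon$ for every larger index $k$. The first alternative of Lemma~\ref{lem:density} provides, for every $k'<i<j<k$, at least $\varepsilon|V_{ij}|$ vertices of $V_{ij}$ with both properties, so Lemma~\ref{lem:Rhorizontal} applied with $S_{ijk}=\{v\in V_{ij}:d_{ij\to k}(v)\ge\varepsilon\}$ and $S'_{ijk}=\{v\in V_{jk}:d_{jk\to i}(v)\ge\varepsilon\}$ yields a consistent choice of such $\beta_{ij}$ on fewer parts. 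With $\beta$ fixed, the first family is obtained as follows: for $i<j<k$ the vertex $\beta_{jk}$ lies in at least $\varepsilon|V_{ij}|\,|V_{ik}|$ edges of the $(i,j,k)$-triad, so Lemma~\ref{lem:neighbors} produces at least $\varepsilon|V_{ij}|/2$ vertices of $V_{ij}$ lying with $\beta_{jk}$ in at least $\varepsilon|V_{ik}|/2$ edges; Lemma~\ref{lem:Rleft} selects a consistent $\alpha_{ij}$ among them, and Lemma~\ref{lem:Rtop} then selects a consistent $\gamma_{ik}$ from the still linear-sized set $\{c\in V_{ik}:\{\alpha_{ij},\beta_{jk},c\}\in E\}$. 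The second family is built symmetrically, using that $\beta_{ij}$ lies in at least $\varepsilon|V_{ik}|\,|V_{jk}|$ edges of the $(i,j,k)$-triad, with Lemma~\ref{lem:Rright} for $\beta'_{jk}$ and Lemma~\ref{lem:Rtop} for $\gamma'_{ik}$. Any vertex among $\alpha_{ij},\beta_{ij},\gamma_{ij},\beta'_{ij},\gamma'_{ij}$ whose symbol does not appear in an edge of the required type is chosen arbitrarily.

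\textbf{Vertical alternatives.} Reversing the order of the parts exchanges the second and third alternatives of Lemma~\ref{lem:density}, and likewise exchanges the second and third conclusions here (it swaps left and right vertices and fixes top vertices), so it suffices to treat the second alternative. Now the glue symbol is $\gamma$: one vertex is both the top vertex $\gamma_{ik}$ of a first-family edge and the left vertex $\gamma_{ij}$ of a second-family edge, so it must be top-heavy into every in-between index and left-heavy into every larger index, which is exactly what the second alternative of Lemma~\ref{lem:density} together with Lemma~\ref{lem:Rvertical} provides consistently. The two families are then completed as before: the first from the top vertex $\gamma_{ik}$ using Lemma~\ref{lem:neighbors}, Lemma~\ref{lem:Rleft} (for $\alpha_{ij}$) and Lemma~\ref{lem:Rright} (for $\beta_{jk}$); the second from the left vertex $\gamma_{ij}$ using Lemma~\ref{lem:neighbors}, Lemma~\ref{lem:Rright} (for $\beta'_{jk}$) and Lemma~\ref{lem:Rtop} (for $\gamma'_{ik}$).

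\textbf{The main obstacle.} The delicate point is the glue vertex at pairs $(i,j)$ for which one of its two heaviness requirements refers to an empty range of indices, since Lemma~\ref{lem:density} gives no information there. In the horizontal alternative these are precisely the pairs containing the smallest or the largest of the current indices, and they are harmless: one runs the Lemma~\ref{lem:Rhorizontal} step with two extra parts, one before and one after all the others, and keeps only the middle ones --- the heaviness required of $\beta_{ij}$ involves only indices outside $\{i,\dots,j\}$. In the vertical alternatives, however, the glue vertex $\gamma_{i,i+1}$ of a \emph{consecutive} pair must still be left-heavy into every larger index, and deleting parts does not destroy consecutiveness, so buffer parts do not help. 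Here I would use that, since every triad has density at least $1/27+\delta$ while $2\varepsilon\le 1/27$, every part $V_{ij}$ contains at least $\varepsilon|V_{ij}|$ vertices $v$ with $d_{ij\to k}(v)\ge\varepsilon$ for any fixed larger index $k$ (otherwise the $(i,j,k)$-triad would have fewer than $2\varepsilon|V_{ij}|\,|V_{ik}|\,|V_{jk}|$ edges); thus a consistent left-heavy choice of $\gamma_{i,i+1}$ for the consecutive pairs is available from Lemma~\ref{lem:Rleft}, and its Ramsey colouring is merged with the one in the proof of Lemma~\ref{lem:Rvertical} so that a single homogeneous set of parts serves all pairs at once. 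With the glue vertices fixed, the two families are assembled as above, and passing to the final $n$ parts gives the induced subhypergraph $H'$ together with the vertices required by the statement.
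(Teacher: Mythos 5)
Your proposal follows essentially the same route as the paper: Lemma~\ref{lem:density} supplies the horizontal or vertical intersection property, Lemma~\ref{lem:Rhorizontal} (resp.\ Lemma~\ref{lem:Rvertical}) fixes the ``glue'' vertices $\beta_{ij}$ (resp.\ $\gamma_{ij}$), and the remaining four families are obtained by alternating Lemma~\ref{lem:neighbors} with Lemmas~\ref{lem:Rleft}, \ref{lem:Rtop}, \ref{lem:Rright}. The order in which you fill in $\alpha,\gamma,\beta',\gamma'$ after fixing the glue differs from the paper's (you complete the first family and then the second; the paper does top--right of the second family, then top--left of the first), but either order preserves linear-size candidate sets at each stage, so this is an inessential variation.

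The observation you flag about consecutive pairs in the vertical case is a real subtlety. The conclusion of Lemma~\ref{lem:Rvertical}, as stated, is quantified over quadruples $i<k'<j<k$ in $I$; for a pair $(i,j)$ that is consecutive in $I$ there is no admissible $k'$, so the lemma gives no information about $\gamma_{ij}$ there, yet the next step (Lemma~\ref{lem:Rtop} applied with $S_{ijk}=\{v\in V_{ik}:d_{ij,ik}(\gamma_{ij},v)\ge\varepsilon/2\}$) requires $d_{ij\to k}(\gamma_{ij})\ge\varepsilon$ for \emph{all} $i<j<k$, including $j=i+1$. The buffer-part trick that the paper uses works for the horizontal case (where the auxiliary indices $k'<i$ and $k>j$ lie outside the pair) but not here, exactly as you observe. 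The paper's sketch of the vertical case glosses over this. Your proposed repair --- note that the density lower bound $1/27+\delta$ together with $2\varepsilon\le 1/27$ forces $|\{v\in V_{ij}:d_{ij\to k}(v)\ge\varepsilon\}|\ge\varepsilon|V_{ij}|$ unconditionally, and then strengthen the homogeneity in Lemma~\ref{lem:Rvertical} to enforce the $S$-membership even when no $k'$ is available --- is sound in principle. It does, however, require one to actually restate and reprove a stronger variant of Lemma~\ref{lem:Rvertical} (a product Ramsey colouring delivering membership in $S_{ijk}$ for \emph{all} $i<j<k$ and in $S'_{ik'j}$ for all $i<k'<j$ simultaneously) rather than invoke the published lemmas as black boxes; ``merging the colourings'' should be spelled out, since the blue-colour witness in the lemma's proof must then certify common membership across the additional family of sets as well. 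With that caveat, the argument is correct and, if anything, more careful than the paper's own treatment of this point.
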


\begin{proof}
Fix $\delta>0$ and $n\in\NN$.
Apply Lemma~\ref{lem:density} with $\delta$ to get $\varepsilon>0$.
Apply Lemma~\ref{lem:Rleft} with $n$ and $\varepsilon/2$ to get $n_1$,
then Lemma~\ref{lem:Rtop} with $n_1$ and $\varepsilon/2$ to get $n_2$,
then Lemma~\ref{lem:Rright} with $n_2$ and $\varepsilon/2$ to get $n_3$,
then Lemma~\ref{lem:Rtop} with $n_3$ and $\varepsilon/2$ to get $n_4$, and
finally Lemma~\ref{lem:Rhorizontal} with $n_4+2$ and $\varepsilon$ to get $N_h$.
Next apply Lemma~\ref{lem:Rright} with $n_1$ and $\varepsilon/2$ to get $n'_2$,
then Lemma~\ref{lem:Rright} again with $n'_2$ and $\varepsilon/2$ to get $n'_3$,
then Lemma~\ref{lem:Rtop} with $n'_3$ and $\varepsilon/2$ to get $n'_4$, and
finally Lemma~\ref{lem:Rvertical} with $n'_4+2$ and $\varepsilon$ to get $N_v$.
We obtain $N$ by applying Lemma~\ref{lem:density} with $\max\{N_h,N_v\}$ (with $\delta$ and $\varepsilon$ as fixed earlier).

Let $H$ be an $N$-partitioned hypergraph with density at least $1/27+\delta$.
By Lemma~\ref{lem:density},
there exists a $\max\{N_h,N_v\}$-partitioned induced subhypergraph $H^5$ of $H$ that
satisfies one of the three properties given in the statement of Lemma~\ref{lem:density}.
We start with analyzing the case that the first property holds, i.e., the case of \emph{horizontal intersection};
this case results in the first case described in the statement of the lemma.
The sought hypergraph $H'$ and
the vertices $\alpha_{ij},\beta_{ij},\gamma_{ij},\beta'_{ij},\gamma'_{ij}$ are obtained as follows.
\begin{itemize}
\item For $1\le k'<i<j<k\le N_h$,
      let $S'_{k'ij}$ be the set of the vertices $v\in V_{ij}$ such that $d_{ij\to k'}(v)\ge\varepsilon$ and
      let $S_{ijk}$ be the set of the vertices $v\in V_{ij}$ such that $d_{ij\to k}(v)\ge\varepsilon$.
      By assumption we are in the horizontal intersection case and the first outcome of Lemma~\ref{lem:density} applies
      and therefore $|S'_{k'ij}\cap S_{ijk}|\ge\varepsilon |V_{ij}|$ for all $1\le k'<i<j<k\le N_h$.
      Hence, Lemma~\ref{lem:Rhorizontal} yields that
      there exists an $(n_4+2)$-partitioned induced subhypergraph of $H^5$ and vertices $\beta_{ij}$ such that
      $d_{ij\to k'}(\beta_{ij})\ge\varepsilon$ and $d_{ij\to k}(\beta_{ij})\ge\varepsilon$ for all $1\le k'<i<j<k\le n_4+2$;
      removing the first and the last part yields an $n_4$-partitioned induced subhypergraph $H^4$ of $H^5$ such that
      $d_{jk\to i}(\beta_{jk})\ge\varepsilon$ and $d_{ij\to k}(\beta_{ij})\ge\varepsilon$ for all $1\le i<j<k\le n_4$.
\item For $1\le i<j<k\le n_4$,
      let $S_{ijk}$ be the set of vertices $v\in V_{ik}$ such that $d_{ij,ik}(\beta_{ij},v)\ge\varepsilon/2$;
      observe that each of the sets $S_{ijk}$ contains at least $\varepsilon |V_{ik}|/2$ elements by Lemma~\ref{lem:neighbors}.
      Lemma~\ref{lem:Rtop} yields that
      there exists an $n_3$-partitioned induced subhypergraph $H^3$ of $H^4$ and vertices $\gamma'_{ik}$ such that
      $d_{ij,ik}(\beta_{ij},\gamma'_{ik})\ge\varepsilon/2$ for all $1\le i<j<k\le n_3$.
\item For $1\le i<j<k\le n_3$,
      let $S_{ijk}$ be the set of vertices $v\in V_{jk}$ that
      form an edge together with $\beta_{ij}$ and $\gamma'_{ik}$ in the $(i,j,k)$-triad of $H^3$, and
      apply Lemma~\ref{lem:Rright} to get an $n_2$-partitioned induced subhypergraph $H^2$ of $H^3$ and vertices $\beta'_{jk}$ such that
      $\{\beta_{ij},\beta'_{jk},\gamma'_{ik}\}$ is an edge of $H^2$ for all $1\le i<j<k\le n_2$.
\item For $1\le i<j<k\le n_2$,
      let $S_{ijk}$ be the set of vertices $v\in V_{ik}$ such that $d_{jk,ik}(\beta_{jk},v)\ge\varepsilon/2$;
      observe that each of the sets $S_{ijk}$ contains at least $\varepsilon |V_{ik}|/2$ elements by Lemma~\ref{lem:neighbors}.
      So, Lemma~\ref{lem:Rtop} yields that
      there exists an $n_1$-partitioned induced subhypergraph $H^1$ of $H^2$ and vertices $\gamma_{ik}$ such that
      $d_{jk,ik}(\beta_{jk},\gamma_{ik})\ge\varepsilon/2$ for all $1\le i<j<k\le n_1$.
\item For $1\le i<j<k\le n_1$,
      let $S_{ijk}$ be the set of vertices $v\in V_{ij}$ that form an edge with $\beta_{jk}$ and $\gamma_{ik}$ in $H^1$.
      Lemma~\ref{lem:Rleft} yields that
      there exists an $n$-partitioned induced subhypergraph $H'$ of $H^1$ and vertices $\alpha_{ij}$ such that
      $\{\alpha_{ij},\beta_{jk},\gamma_{ik}\}$ is an edge of $H'$ for $1\le i<j<k\le n$.
\end{itemize}
The hypergraph $H'$ together with the vertices $\alpha_{ij},\beta_{ij},\gamma_{ij},\beta'_{ij},\gamma'_{ij}$
satisfies the first case of the lemma.

The case of \emph{vertical intersection} from Lemma~\ref{lem:density} is analyzed in an analogous way.
We next sketch the steps resulting in the sought hypergraph $H'$ and
the vertices $\alpha_{ij},\beta_{ij},\gamma_{ij},\beta'_{ij},\gamma'_{ij}$
if the second property in the statement of Lemma~\ref{lem:density} applies.
\begin{itemize}
\item Lemma~\ref{lem:Rvertical} is used to obtain an $(n'_4+2)$-partitioned induced subhypergraph of $H^5$ and
      vertices $\gamma_{ij}$ such that 
      $d_{ij\to k}(\gamma_{ij})\ge\varepsilon$ and $d_{ij\to k'}(\gamma_{ij})\ge\varepsilon$ for all $1\le i<k'<j<k\le n_4+2$;
      removing the first and the last part yields an $n'_4$-partitioned induced subhypergraph $H^4$ of $H^5$.
\item Lemma~\ref{lem:Rtop} is used to obtain an $n'_3$-partitioned induced subhypergraph of $H^3$ of $H^4$ and
      vertices $\gamma'_{ik}$ such that
      $d_{ij,ik}(\gamma_{ij},\gamma'_{ik})\ge\varepsilon/2$ for all $1\le i<j<k\le n_3$.
\item Lemma~\ref{lem:Rright} is used to obtain an $n'_2$-partitioned induced subhypergraph of $H^2$ of $H^3$ and
      vertices $\beta'_{jk}$ such that
      $\{\gamma_{ij},\beta'_{jk},\gamma'_{ik}\}$ is an edge of $H^2$.
\item Lemma~\ref{lem:Rright} is used to obtain an $n'_1$-partitioned induced subhypergraph of $H^1$ of $H^2$ and
      vertices $\beta_{jk}$ such that
      $d_{jk,ik}(\beta_{jk},\gamma_{ik})\ge\varepsilon/2$ for all $1\le i<j<k\le n_1$.
\item Finally, Lemma~\ref{lem:Rleft} is used to obtain an $n$-partitioned induced subhypergraph of $H'$ of $H^1$ and
      vertices $\alpha_{ij}$ such that
      $\{\alpha_{ij},\beta_{jk},\gamma_{ik}\}$ is an edge of $H'$ for $1\le i<j<k\le n$.
\end{itemize}
The obtained hypergraph $H'$ together with the vertices $\alpha_{ij},\beta_{ij},\gamma_{ij},\beta'_{ij},\gamma'_{ij}$
satisfies the second case of the lemma.
The case of the third property in the statement of Lemma~\ref{lem:density} applies
is completely symmetric and yields the third case of the lemma.
\end{proof}

\section{Main theorem and examples}
\label{sec:final}

We are now ready to prove our main theorem.
This is done by transferring our result about $n$-partitioned hypergraphs contained in Lemma~\ref{lem:main}
back to the original setting of uniformly dense hypergraphs by using Proposition~\ref{prop:reiher}.  
The second and third properties in the statement of the theorem correspond to the cases of horizontal and vertical intersection, respectively,
as described in Lemmas~\ref{lem:density} and~\ref{lem:main};
note that the second and third cases in the two lemmas are symmetric (by reversing the order of the parts) and
so are associated with the case of vertical intersection.
The definition of a vanishing ordering can be found in Section~\ref{sec:intro}.

\begin{theorem}
\label{thm:main}
Let $H_0$ be an $n$-vertex $3$-graph that 
\begin{itemize}
\item has no vanishing ordering of its vertices,
\item can be partitioned into two spanning subhypergraphs $H_1$ and $H_2$ such that
      there exists an ordering of the vertices that is vanishing both for $H_1$ and $H_2$ and
      if $e_1$ is an edge of $H_1$ and $e_2$ is an edge of $H_2$ such that $|e_1\cap e_2|=2$,
      then the pair $e_1\cap e_2$ is right with respect to $H_1$ and left with respect to $H_2$, and
\item can be partitioned into two spanning subhypergraphs $H'_1$ and $H'_2$ such that
      there exists an ordering of the vertices that is vanishing both for $H'_1$ and $H'_2$ and
      if $e_1$ is an edge of $H'_1$ and $e_2$ is an edge of $H'_2$ such that $|e_1\cap e_2|=2$,
      then the pair $e_1\cap e_2$ is top with respect to $H'_1$ and left with respect to $H'_2$.
\end{itemize}
The uniform Tur\'an density of $H_0$ is equal to $1/27$.
\end{theorem}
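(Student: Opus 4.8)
The plan is to combine the lower bound from Corollary~\ref{cor:rrs} with an upper bound obtained by feeding Lemma~\ref{lem:main} into Proposition~\ref{prop:reiher}. The lower bound is immediate: since $H_0$ has no vanishing ordering, Theorem~\ref{thm:rrs} shows that its uniform Tur\'an density is nonzero, hence at least $1/27$ by Corollary~\ref{cor:rrs}. For the upper bound I would invoke Proposition~\ref{prop:reiher} with $d=1/27$, reducing the task to showing that for every $\delta>0$ there is an $N$ such that every $N$-partitioned hypergraph of density at least $1/27+\delta$ embeds $H_0$. Given $\delta$, I would apply Lemma~\ref{lem:main} with this $\delta$ and with $n$ the number of vertices of $H_0$ to obtain such an $N$: any $N$-partitioned hypergraph $H$ of density at least $1/27+\delta$ then has an $n$-partitioned induced subhypergraph $H'$ carrying vertices $\alpha_{ij},\beta_{ij},\gamma_{ij},\beta'_{ij},\gamma'_{ij}\in V_{ij}$ with $\{\alpha_{ij},\beta_{jk},\gamma_{ik}\}$ always an edge and one of the three listed additional patterns holding. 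The remaining task, which is the heart of the argument, is to produce an embedding of $H_0$ into $H'$ (hence into $H$) in each of the three cases.

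The embedding mechanism is as follows. The always-present edge $\{\alpha_{ij},\beta_{jk},\gamma_{ik}\}$ realizes a ``generic triad'' whose left pair $V_{ij}$ is filled by an $\alpha$-vertex, whose top pair $V_{ik}$ is filled by a $\gamma$-vertex, and whose right pair $V_{jk}$ is filled by a $\beta$-vertex; each extra pattern supplies a second generic triad reusing exactly one vertex of the first. In the horizontal-intersection outcome the second triad is $\{\beta_{ij},\beta'_{jk},\gamma'_{ik}\}$, so it has $\beta$ on the left, $\gamma'$ on the top, $\beta'$ on the right, and the reused vertex $\beta$ occupies the right slot of the first triad and the left slot of the second. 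I would then take the decomposition $H_0=H_1\cup H_2$ and the common vanishing ordering from the second hypothesis of the theorem, relabel the vertices of $H_0$ so that this ordering is $1<\dots<n$, and embed using the indices $1,\dots,n$, setting for each pair $p$ of vertices of $H_0$: $v_p\in\{\alpha_p,\gamma_p,\beta_p\}$ according to the role (left/top/right) of $p$ in $H_1$ if $p$ lies in an edge of $H_1$; $v_p\in\{\beta_p,\gamma'_p,\beta'_p\}$ according to its role in $H_2$ if $p$ lies in an edge of $H_2$; and $v_p$ arbitrary if $p$ lies in no edge. The only possible clash is a pair lying in edges of both $H_1$ and $H_2$, but the compatibility clause of the second hypothesis forces such a pair to be right with respect to $H_1$ and left with respect to $H_2$, and both prescriptions then give $v_p=\beta_p$. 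With these choices every edge of $H_1$ maps to an edge of the first type and every edge of $H_2$ to an edge of the second type, so $H'$ embeds $H_0$.

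For the two vertical-intersection outcomes I would repeat this with the decomposition $H_0=H'_1\cup H'_2$ and vanishing ordering from the third hypothesis. If the second triad is $\{\gamma_{ij},\beta'_{jk},\gamma'_{ik}\}$, the reused vertex $\gamma$ sits at the top of the first triad and at the left of the second, so the analogous assignment ($v_p=\gamma_p$ whenever $p$ is top with respect to $H'_1$ or left with respect to $H'_2$, the remaining slots being filled by $\alpha,\beta,\beta',\gamma'$) is consistent exactly because shared pairs are top with respect to $H'_1$ and left with respect to $H'_2$. If instead the second triad is $\{\beta'_{ij},\gamma_{jk},\gamma'_{ik}\}$, then $\gamma$ sits at the top of the first triad and at the right of the second; here I would first replace the ordering from the third hypothesis by its reverse, noting that under reversal top pairs stay top, left and right pairs are interchanged, and a vanishing ordering for $H'_1$ and for $H'_2$ remains vanishing, so that shared pairs become top with respect to $H'_1$ and right with respect to $H'_2$, after which the previous argument applies verbatim. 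Having handled all three outcomes, every $N$-partitioned hypergraph of density at least $1/27+\delta$ embeds $H_0$, so Proposition~\ref{prop:reiher} gives uniform Tur\'an density at most $1/27$, which together with the lower bound yields exactly $1/27$.

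I expect the main obstacle to be organizational rather than computational: correctly pairing each of the three outcomes of Lemma~\ref{lem:main} with the right structural hypothesis, checking that the unique reused vertex of the second generic triad ($\beta$ in the horizontal case, $\gamma$ in the two vertical cases) occupies precisely the two slots singled out by the relevant compatibility clause, and using the order-reversal symmetry to fold the last vertical outcome into the first. Once this dictionary between ``slot of a triad (left/top/right)'' and ``which of $\alpha,\beta,\gamma,\beta',\gamma'$ to choose'' is fixed, verifying that each edge of $H_0$ lands on an edge of $H'$ is a one-line split on whether the edge belongs to $H_1$ or to $H_2$, with no genuine calculation involved.
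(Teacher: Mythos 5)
Your proposal is correct and follows essentially the same route as the paper: lower bound from Theorem~\ref{thm:rrs} and Corollary~\ref{cor:rrs}, upper bound via Proposition~\ref{prop:reiher} with $d=1/27$, Lemma~\ref{lem:main} supplying the two nested ``generic triads'' in one of three outcomes, and the embedding dictionary matching the reused vertex ($\beta$ in the horizontal case, $\gamma$ in the vertical cases) to the shared pair singled out by the compatibility clause. The only cosmetic difference is in the third outcome: the paper reverses the order of the parts of $H'$ (and implicitly relabels $\alpha\leftrightarrow\beta$), while you reverse the vanishing ordering of $H_0$ and note that this swaps left/right; these are dual bookkeeping moves that achieve the same reduction.
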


\begin{proof}
Fix an $n$-vertex $3$-graph $H_0$ with the properties given in the statement of the lemma.
Since $H_0$ has no vanishing ordering, its uniform Tur\'an density is at least $1/27$ by Theorem~\ref{thm:rrs} and Corollary~\ref{cor:rrs}.
By Proposition~\ref{prop:reiher}, we need to show that for every $\delta>0$,
there exists $N$ such that every $N$-partitioned hypergraph with density at least $1/27+\delta$ embeds $H_0$.
Apply Lemma~\ref{lem:main} with $n$ and $\delta$ to get $N$.
Let $H$ be an $N$-partitioned hypergraph with density at least $1/27+\delta$ and
let $H'$ be an $n$-partitioned induced subhypergraph of $H$ with one of the three properties given in Lemma~\ref{lem:main}.
In each of the three cases given by which of the three properties holds,
we use a partition given in the second or in the third case of the statement of the theorem,
to embed $H_1$ using the edges formed by the vertices $\alpha_{ij}$, $\beta_{ij}$ and $\gamma_{ij}$, and
to embed $H_2$ using the edges formed by the vertices $\beta'_{ij}$, $\gamma'_{ij}$ and either $\beta_{ij}$ or $\gamma_{ij}$.

If the first property given in Lemma~\ref{lem:main} holds,
we consider an ordering of the vertices of $H_0$ as described in the second bullet point in the statement of the theorem and
choose $\alpha_{ij}$ for every pair $i,j$ that is left with respect to $H_1$,
choose $\beta_{ij}$ for every pair $i,j$ that is right with respect to $H_1$ or left with respect to $H_2$,
choose $\gamma_{ij}$ for every pair $i,j$ that is top with respect to $H_1$,
choose $\beta'_{ij}$ for every pair $i,j$ that is right with respect to $H_2$, and
choose $\gamma'_{ij}$ for every pair $i,j$ that is top with respect to $H_2$.
Hence, $H'$ embeds $H_0$.

If the second property given in Lemma~\ref{lem:main} holds,
we consider an ordering of the vertices of $H_0$ as described in the third bullet point in the statement of the theorem and
choose $\alpha_{ij}$ for every pair $i,j$ that is left with respect to $H_1$,
choose $\beta_{ij}$ for every pair $i,j$ that is right with respect to $H_1$,
choose $\gamma_{ij}$ for every pair $i,j$ that is top with respect to $H_1$ or left with respect to $H_2$,
choose $\beta'_{ij}$ for every pair $i,j$ that is right with respect to $H_2$, and
choose $\gamma'_{ij}$ for every pair $i,j$ that is top with respect to $H_2$.
Again, we conclude that $H'$ embeds $H_0$.

The third property given in Lemma~\ref{lem:main} is symmetric to the second (by reversing the order of the parts of $H'$), and
the arguments as in the previous paragraph yield that $H'$ embeds $H_0$.
Since $H'$ embeds $H_0$ regardless which of the three properties given in Lemma~\ref{lem:main} holds,
the uniform Tur\'an density of $H_0$ is at most $1/27$ by Proposition~\ref{prop:reiher}.
\end{proof}

We next give examples of $3$-graphs that satisfy the assumption of Theorem~\ref{thm:main} and
so their uniform Tur\'an density is equal to $1/27$.
We start with introducing a lemma,
which will be useful to rule out the existence of a vanishing ordering of vertices of a $3$-graph.
We say that a directed graph is \emph{simple} if every pair of its vertices is joined by at most one edge,
i.e., there are no parallel edges or pairs of edges oriented in the opposite way.

\begin{lemma}
\label{lem:vanishing}
A $3$-graph $H$ has a vanishing ordering if and only if
there exists a simple directed graph $G$ with the same vertex set as $H$ such that
each edge of $H$ corresponds to a cyclically directed triangle with edges colored $1$, $2$ and $3$ (in this order), and
there exist distinct indices $i$ and $j$, $i,j\in\{1,2,3\}$, such that
the subgraph of $G$ containing all edges colored with $i$ and $j$ is acyclic.
\end{lemma}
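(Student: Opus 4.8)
The plan is to exhibit an explicit dictionary between the three classes $L$, $T$, $R$ of pairs appearing in a vanishing ordering and the three colour classes $1$, $2$, $3$ of $G$, and then check that a vanishing ordering and a valid $G$ encode the same combinatorial data.

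For the forward direction, suppose $v_1,\dots,v_n$ is a vanishing ordering of $H$ with associated partition $L\cup T\cup R$ of all pairs (recall that pairs not contained in a common edge are placed, say, in $L$, so every pair is classified). Define a simple directed edge-coloured graph $G$ on $V(H)$ by setting, for $i<j$: the edge $v_i\to v_j$ with colour $1$ if $(i,j)\in L$; the edge $v_i\to v_j$ with colour $2$ if $(i,j)\in R$; and the edge $v_j\to v_i$ with colour $3$ if $(i,j)\in T$. Since each pair falls into exactly one class, $G$ is simple. For an edge $\{v_i,v_j,v_k\}$ of $H$ with $i<j<k$ one has $(i,j)\in L$, $(j,k)\in R$ and $(i,k)\in T$, so $G$ contains the cyclically directed triangle $v_i\to v_j\to v_k\to v_i$ with colours $1,2,3$ in this cyclic order. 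Finally, every edge of colours $1$ or $2$ runs from a lower-indexed to a higher-indexed vertex, so $v_1,\dots,v_n$ is a topological order of the subgraph of $G$ on colours $1$ and $2$; hence that subgraph is acyclic and the pair $\{i,j\}=\{1,2\}$ works.

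For the converse, suppose such a $G$ exists. Discarding from $G$ all edges that do not lie in a triangle corresponding to an edge of $H$ keeps $G$ simple, keeps every edge of $H$ realised by a cyclic triangle, and can only preserve acyclicity of the relevant two-colour subgraph, so we may assume every edge of $G$ lies in such a triangle. A cyclic permutation of the colour labels $1\mapsto 2\mapsto 3\mapsto 1$ preserves the property of being a cyclically directed triangle with colours $1,2,3$ in cyclic order, so after relabelling we may assume that the subgraph $G_{12}$ on colours $1$ and $2$ is acyclic. Fix a topological order of $G_{12}$ and extend it arbitrarily to a total order $v_1,\dots,v_n$ of $V(H)$. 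For $i<j$, place the pair $(i,j)$ into $R$ if $\{v_i,v_j\}$ carries a colour-$2$ edge, into $T$ if it carries a colour-$3$ edge, and into $L$ otherwise; simplicity of $G$ makes this a genuine partition of all pairs. Now take any edge $\{a,b,c\}$ of $H$ and write its triangle as $x\to y\to z\to x$ with colours $1,2,3$ in this order. The colour-$1$ and colour-$2$ edges $x\to y$ and $y\to z$ lie in $G_{12}$, so $x<y<z$ in our order; thus $(x,y),(y,z),(x,z)$ are exactly the pairs $(i,j),(j,k),(i,k)$ of the sorted triple $i<j<k$, and by construction $(i,j)\in L$, $(j,k)\in R$, $(i,k)\in T$. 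Hence $v_1,\dots,v_n$ is a vanishing ordering of $H$.

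The points requiring care are all routine: well-definedness of the two partitions (guaranteed by simplicity of $G$ and by the convention of dumping the leftover pairs into $L$), the reduction of the three possibilities for $\{i,j\}$ to the single case $\{1,2\}$ via a cyclic relabelling of colours, and the observation that extending the topological order of $G_{12}$ to all of $V(H)$ does not disturb any of the triangles. Beyond keeping the orientation-and-colour bookkeeping consistent, there is no genuine obstacle.
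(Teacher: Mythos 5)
Your proof is correct and follows essentially the same dictionary as the paper's (left/right/top pairs correspond to colors $1$/$2$/$3$, with left and right edges oriented forward and top edges oriented backward, so that a vanishing order is precisely a topological order of the two-color subgraph $G_{12}$). The only differences are cosmetic: you populate $G$ with an edge for every pair rather than only pairs covered by $H$, you explicitly carry out the color-rotation to reduce to the $\{1,2\}$ case, and you are a touch more careful than the paper in taking a topological order of $G_{12}$ rather than of $G$ itself (which need not be acyclic); the discarding step at the start of your converse is harmless but unnecessary.
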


\begin{proof}
We show that the existence of a directed graph $G$ with properties as given in the statement of the lemma
is equivalent to the existence of a vanishing ordering of $H$.
First suppose that there exists a directed graph $G$ with edges colored as described in the statement of the lemma.
By symmetry, we may assume that the subgraph of $G$ containing all edges colored with $1$ and $2$ is acyclic (otherwise,
we cyclically rotate the colors to satisfy this).
Consider a linear ordering of the vertices of $H$ that
is an extension of the partial order given by the existence of a directed path in $G$.
We claim that this linear ordering is vanishing.
Indeed, all left edges are colored with $1$, all right edges with $2$ and all top edges with $3$.

Next suppose that there exists a vanishing ordering of the vertices of $H$ and
consider the following simple directed graph $G$:
if $\{u,v,w\}$ is an edge of $H$ such that $uv$ is the left pair, $vw$ is the right pair and $uw$ is the top pair,
include the edge $uv$ directed from $u$ to $v$ and colored with $1$,
the edge $vw$ directed from $v$ to $w$ and colored with $2$, and
the edge $uw$ directed from $w$ to $u$ and colored with $3$.
The subgraph of $G$ containing all edges colored with $1$ and $2$ satisfies that
every edge is directed from a smaller vertex to a larger vertex, and so the subgraph is acyclic.
Hence, $G$ has the properties described in the statement of the lemma.
\end{proof}

As the first example of a $3$-graph with uniform Tur\'an density equal to $1/27$,
we present a $3$-graph with seven vertices, which is the smallest possible number of vertices.
The $3$-graph has a non-trivial group of automorphisms,
which correspond to a vertical mirror symmetry in Figure~\ref{fig:example9a} where the $3$-graph is visualized.

\begin{theorem}
\label{thm:example9}
Let $H$ be a $3$-graph with seven vertices $a,\ldots,g$ and
the following $9$ edges: $abc$, $ade$, $bcd$, $bcf$, $cde$, $def$, $abg$, $cdg$ and $efg$.
The uniform Tur\'an density of $H$ is equal to $1/27$.
\end{theorem}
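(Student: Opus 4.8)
The plan is to verify that the $7$-vertex $3$-graph $H$ satisfies all three hypotheses of Theorem~\ref{thm:main}, after which the conclusion is immediate. So I need to (i) exhibit no vanishing ordering exists, (ii) find a partition $H=H_1\cup H_2$ with a common vanishing ordering where every shared pair is right for $H_1$ and left for $H_2$, and (iii) find a partition $H=H'_1\cup H'_2$ with a common vanishing ordering where every shared pair is top for $H'_1$ and left for $H'_2$.

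For the non-existence of a vanishing ordering, I would invoke Lemma~\ref{lem:vanishing}. Suppose for contradiction that $H$ has a vanishing ordering; then there is a simple directed graph $G$ on $\{a,\dots,g\}$ in which each of the $9$ edges becomes a cyclically oriented $3$-coloured triangle (colours $1,2,3$ in cyclic order), and for some pair of colours the union of those two colour classes is acyclic. The key obstruction is that the edges overlap heavily in pairs (e.g. the pair $bc$ lies in $abc$, $bcd$ and $bcf$; the pair $cd$ lies in $bcd$, $cde$ and $cdg$; the pair $de$ lies in $ade$, $cde$, $def$), so the colours forced on these shared pairs by the different triangles must be consistent, and simplicity forces each pair to receive a single orientation. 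I would enumerate the constraints this places — each triangle's three pairs get the three colours in cyclic order consistent with the orientation — and show that in every case the two-colour subgraph one tries to make acyclic in fact contains a directed cycle (tracing through the overlapping triangles $abc,bcd,cde,ade$ and the "triangle of triangles" $abg,cdg,efg$ should produce the contradiction). This is essentially a finite case check; I expect it to be the main obstacle, since it must be done carefully to be airtight, but it is routine in spirit.

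For hypothesis (ii) I would take the ordering $a<b<c<d<e<f<g$ (or whichever ordering the figure suggests) and split the $9$ edges so that $H_1$ collects the edges "on the left part" of the structure and $H_2$ the rest, checking that each of $H_1,H_2$ is individually vanishing under this common ordering — i.e. for each of them the left/top/right role of each pair is consistent across all its edges — and that whenever an $H_1$-edge and an $H_2$-edge share a pair, that pair is the right pair of the $H_1$-edge and the left pair of the $H_2$-edge. Concretely one guesses $H_1=\{abc,bcd,cde,def\}$-type chains and $H_2=\{ade,bcf,abg,cdg,efg\}$-type "nested/long" edges, then reads off the $L,T,R$ partition for each and verifies the overlap condition pair by pair; if a conflict appears, one reassigns the offending edge or relabels the vertices. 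Hypothesis (iii) is handled the same way but with the roles swapped: split $H$ into $H'_1\cup H'_2$, common vanishing ordering, and every shared pair is top for $H'_1$ and left for $H'_2$; by the symmetry of $H$ noted before the statement (the mirror automorphism), a good choice for (iii) can often be obtained from the choice for (ii) by applying that automorphism together with reversing the order of the vertices.

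Once all three bullet points of Theorem~\ref{thm:main} are checked, that theorem gives directly that the uniform Tur\'an density of $H$ equals $1/27$, completing the proof. I would present the verification of (ii) and (iii) via explicit tables listing, for each edge, its three vertices in increasing order and the resulting left/top/right pairs, so the overlap conditions can be read off mechanically; the only genuinely delicate part is the Lemma~\ref{lem:vanishing} argument ruling out a vanishing ordering, and I would write that out in full rather than leaving it to the reader.
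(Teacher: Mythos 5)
Your overall strategy is the same as the paper's: deduce the theorem from Theorem~\ref{thm:main} by verifying its three hypotheses, using Lemma~\ref{lem:vanishing} for the first. However, the concrete plan you sketch diverges from (and is weaker than) the paper's in two ways, and one of them is a genuine gap.

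For condition (i), the paper does not carry out a blind case enumeration. It observes that once you fix the orientation and the $\{1,2,3\}$-coloring of a single triangle (say $abc$), the cyclic rule of Lemma~\ref{lem:vanishing} propagates the orientation and colour of every other pair through shared pairs ($bc$ to $bcd$ and $bcf$, $cd$ to $cde$ and $cdg$, etc.), so the directed graph $G$ is unique up to a cyclic shift of the colours and a global reversal of orientations. Then one only has to check one candidate $G$ and exhibit a directed cycle in each of the three two-colour subgraphs. Your plan to ``enumerate the constraints'' will eventually rediscover this rigidity, but you should make the propagation argument explicit: it is what reduces the ``finite case check'' you describe to a single case.

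For conditions (ii) and (iii) there is a more substantive problem. Your suggested split $H_1=\{abc,bcd,cde,def\}$, $H_2=$ (rest) does not work: under any ordering, $bc$ would have to be the right pair in $abc$ and the left pair in $bcd$ inside $H_1$, so $H_1$ itself is not vanishing. More importantly, the idea of splitting the edges into a ``chain'' part and a ``nested'' part misses the key simplifying idea. The paper takes $H_1=H'_1$ to be the single edge $abg$ and $H_2=H'_2$ to be the remaining eight edges, with the orderings $egbdfac$ (for $H_2$) and $ebgdfac$ (for $H'_2$). A single edge is trivially vanishing under any ordering, the set of shared pairs collapses to just $\{a,b\}$, and the pair $ab$ is left for $H_2$ and $H'_2$, right for $H_1$ under the first ordering and top for $H'_1$ under the second. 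So the entire verification reduces to checking that one eight-edge subhypergraph has a vanishing ordering (twice, with a small change in the position of $g$) and then reading off where the single pair $ab$ lands. Without finding this kind of decomposition your ``guess-and-check, reassign if a conflict appears'' step is where the proof is not actually done, and it is not obvious a priori that iterating on chain-style splits would converge to a working one.

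So: right framework, but you are missing the observation that $H_1$ should be a single edge, and you should make the uniqueness-of-$G$ argument explicit rather than appealing to a vague finite enumeration.
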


\begin{figure}
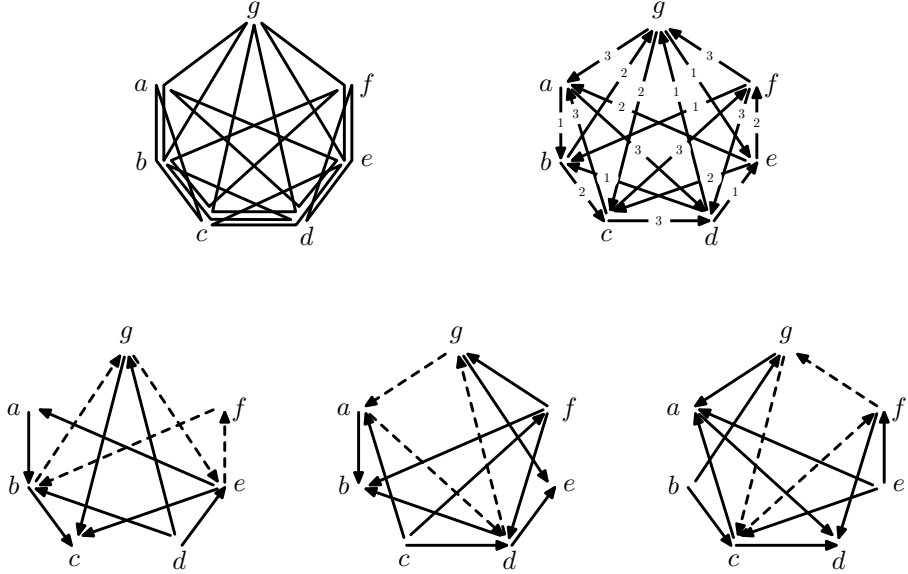

\begin{center}
\epsfbox{turan27-1.mps}
\hskip 2cm
\epsfbox{turan27-2.mps}
\vskip 1cm
\epsfbox{turan27-3.mps}
\hskip 1cm
\epsfbox{turan27-4.mps}
\hskip 1cm
\epsfbox{turan27-5.mps}
\end{center}
\caption{The $3$-graph $H$ described in the statement of Theorem~\ref{thm:example9} (the edges correspond to the drawn triangles),
         the unique (up to a symmetry) graph $G$ associated with $H$ as described in Lemma~\ref{lem:vanishing} and
	 the three subgraphs containing all edges with distinct pairs of colors.
	 Cycles witnessing that neither of the three subgraphs is acyclic are drawn dashed.}
\label{fig:example9a}
\end{figure}

\begin{figure}
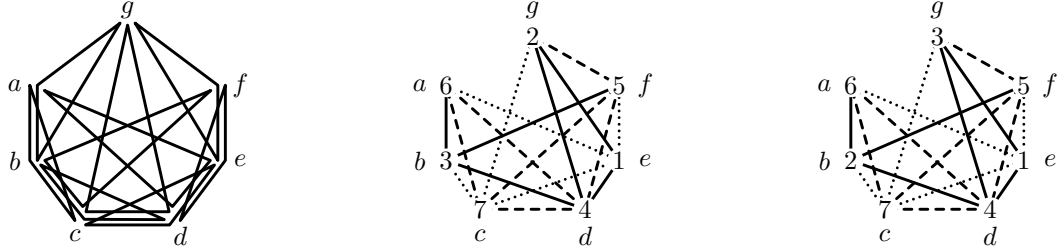

\begin{center}
\epsfbox{turan27-1.mps}
\hskip 2cm
\epsfbox{turan27-6.mps}
\hskip 2cm
\epsfbox{turan27-7.mps}
\end{center}
\caption{The $3$-graph $H$ described in the statement of Theorem~\ref{thm:example9} and
         the vanishing orders with respect to $3$-graphs $H_2$ and $H'_2$ as in the statement of Theorem~\ref{thm:main};
	 the $3$-graphs $H_2$ and $H'_2$ are obtained by removing the edge $abg$ from $H$.
	 The left pairs are drawn solid, the right pairs dashed and the top pairs dotted.}
\label{fig:example9b}
\end{figure}

\begin{proof}
Consider a directed graph $G$ associated with the $3$-graph $H$ as described in the statement of Lemma~\ref{lem:vanishing}.
Note that if we fix an orientation and a coloring for the triple $abc$ then all the orientations and colors of the graph are fixed. Therefore $G$ is unique up to cyclical shifts of the colors and a swap of all the orientations. Hence it suffices to consider the directed graph $G$ depicted in Figure~\ref{fig:example9a} together with the three subgraphs
containing all edges of the colors $1$ and $2$, all edges of the colors $1$ and $3$, and all edges of the colors $2$ and $3$.
Since neither of the three subgraphs is acyclic,
the $3$-graph $H$ has no vanishing ordering by Lemma~\ref{lem:vanishing}.
Hence, the first condition in the statement of Theorem~\ref{thm:main} holds.

We next verify the second and third conditions in the statement of Theorem~\ref{thm:main}.
We set $H_1$ and $H'_1$ to be the $3$-graphs with the same vertex set as $H$ and the edge $abg$ only, and
$H_2$ and $H'_2$ the $3$-graphs obtained from $H$ by removing the edge $abg$.
We consider the ordering $egbdfac$ of the vertices of $H_2$ and the ordering $ebgdfac$ of the vertices of $H'_2$.
The orderings are vanishing with respect to $H_2$ and $H'_2$, respectively;
this can be straightforwardly verified with the aid of Figure~\ref{fig:example9b}.
Note that $ab$ is the only pair shared by an edge of both $H_1$ and $H_2$, as well as the only pair shared by an edge of both $H_1'$ and $H_2'$.
The pair $ab$ is left with respect to both $H_2$ and $H'_2$.
Since the pair $ab$ is right with respect to $H_1$ and top with respect to $H'_1$ (the orderings are vanishing
with respect to $H_1$ and $H'_1$ as the $3$-graph consists of a single edge),
the second and third conditions in the statement of Theorem~\ref{thm:main} hold.
We conclude that the uniform Tur\'an density of $H$ is equal to $1/27$.
\end{proof}

We next present an infinite family of $3$-graphs with uniform Tur\'an density equal to $1/27$;
the smallest $3$-graph in the family has eight vertices.
The family enjoys three cyclic symmetries (by mapping the vertices $c_i$, $d_i$ and $e_i$ to each other in a cyclic way).

\begin{theorem}
\label{thm:example8}
For a positive integer $k$,
let $H^k$ be the $3$-graph with $5+3k$ vertices $a,b,c_0,\ldots,c_k,d_0,\ldots,d_k,e_0,\ldots e_k$ and
the following $3(k+2)$ edges:
\[
\begin{array}{llllll}
abc_0, & bc_0c_1, & c_0c_1c_2, & \ldots, & c_{k-2}c_{k-1}c_k, & c_{k-1}c_kd_k,\\
abd_0, & bd_0d_1, & d_0d_1d_2, & \ldots, & d_{k-2}d_{k-1}d_k, & d_{k-1}d_ke_k,\\
abe_0, & be_0e_1, & e_0e_1e_2, & \ldots, & e_{k-2}e_{k-1}e_k, & e_{k-1}e_kc_k.
\end{array}
\]
The uniform Tur\'an density of $H^k$ is equal to $1/27$.
\end{theorem}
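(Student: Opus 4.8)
The plan is to verify that each $H^k$ satisfies the three conditions of Theorem~\ref{thm:main}, from which the conclusion follows immediately (the lower bound $1/27$ comes from Theorem~\ref{thm:rrs} and Corollary~\ref{cor:rrs}, since we will have shown $H^k$ has no vanishing ordering). First I would check that $H^k$ has no vanishing ordering using Lemma~\ref{lem:vanishing}. As in the proof of Theorem~\ref{thm:example9}, one fixes an orientation and coloring on one edge, say $abc_0$, and observes that this propagates along the ``chain'' of edges sharing two vertices, so the associated directed graph $G$ is essentially unique up to cyclic rotation of colors and reversal of all orientations. The key point is that the three chains $c_0\cdots c_k$, $d_0\cdots d_k$, $e_0\cdots e_k$ close up into a cycle through the linking edges $c_{k-1}c_kd_k$, $d_{k-1}d_ke_k$, $e_{k-1}e_kc_k$, together with the three edges at $ab$; I would exhibit, for each of the three two-color subgraphs, an explicit directed cycle (the natural candidate is a cycle that traverses all three chains and uses the cyclic symmetry), thereby showing none of them is acyclic.

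Next I would exhibit the partition into $H_1, H_2$ with a common vanishing ordering. Mirroring Theorem~\ref{thm:example9}, the natural choice is $H_1$ consisting of a single ``problematic'' edge and $H_2=H^k\setminus\{\text{that edge}\}$; the removed edge should be one incident to $ab$, say $abc_0$. I would then write down an explicit ordering of the $5+3k$ vertices that is vanishing for $H_2$: something that places $a,b$ early, interleaves the chains appropriately, and makes all the linking edges and chain edges respect a consistent left/top/right assignment. One checks $ab$ is the unique pair shared between an edge of $H_1$ and an edge of $H_2$, that $ab$ is left with respect to $H_2$ under this ordering, and that $ab$ is right with respect to $H_1$ (trivially, since $H_1$ is a single edge and we just declare the ordering so). For the third condition one does the same with $H'_1=\{abc_0\}$ and $H'_2=H^k\setminus\{abc_0\}$, using a possibly different vanishing ordering in which $ab$ is top with respect to $H'_1$ (again free, single edge) and left with respect to $H'_2$; quite possibly the same ordering on $H^k\setminus\{abc_0\}$ works by just swapping $a$ and $b$, as in Theorem~\ref{thm:example9}.

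The main obstacle is finding the explicit vanishing orderings for the infinite family and verifying them uniformly in $k$. Unlike the finite case, one cannot just point at a figure; I would need to describe the ordering by a formula and argue that along each chain $x_0x_1\cdots x_k$ (for $x\in\{c,d,e\}$), each consecutive triple $x_{i-1}x_ix_{i+1}$ gets a valid $(L,T,R)$ labelling consistent across all triples, and that the three ``boundary'' edges $abx_0$, $bx_0x_1$, $x_{k-1}x_ky_k$ (where $y$ is the cyclically next letter) also fit. A clean way is to observe each chain, by itself, is a tight path, which has an obvious vanishing ordering $x_0<x_1<\cdots<x_k$ (left pairs $x_ix_{i+1}$, right pairs $x_{i+1}x_{i+2}$, top pairs $x_ix_{i+2}$), and then one must splice the three tight paths plus the edges at $a,b$ plus the three linking edges into one global ordering respecting a single partition; the linking edges are the delicate part since $x_{k-1}x_k$ must simultaneously be consistent with its role in chain $x$ and in the linking edge $x_{k-1}x_ky_k$. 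I expect an ordering roughly of the form $a,b,c_0,c_1,\ldots,c_k,d_0,\ldots,d_k,e_0,\ldots,e_k$ (with $a,b$ possibly swapped, and possibly the last chain reversed) to work, and the verification reduces to a finite check of the edge types at the ``seams,'' independent of $k$. Once both partitions are in place, Theorem~\ref{thm:main} yields that the uniform Tur\'an density of $H^k$ equals $1/27$.
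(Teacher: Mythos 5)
Your lower-bound step (ruling out a vanishing ordering of $H^k$ via Lemma~\ref{lem:vanishing}, propagating the forced coloring along the three chains and finding a cycle in each two-colour subgraph) is essentially what the paper does; the paper exhibits a monochromatic triangle on $c_kd_k,d_ke_k,e_kc_k$ and a two-coloured hexagon. The upper-bound part of your plan, however, breaks at two concrete points.

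First, the choice $H_1=\{abc_0\}$ cannot satisfy the hypotheses of Theorem~\ref{thm:main}. Those hypotheses require that \emph{every} pair shared by an edge of $H_1$ and an edge of $H_2$ be the right pair (respectively top pair) of $H_1$; a single edge has only one right pair and one top pair, so $H_1$ may share at most one pair with $H_2$. But $abc_0$ shares two distinct pairs with $H_2=H^k\setminus\{abc_0\}$: the pair $ab$ (with $abd_0$ and $abe_0$) and the pair $bc_0$ (with $bc_0c_1$). Your assertion that ``$ab$ is the unique pair shared between an edge of $H_1$ and an edge of $H_2$'' is therefore false, and no ordering can make both $ab$ and $bc_0$ the right pair of $abc_0$. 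The paper avoids this by removing the linking edge $e_{k-1}e_kc_k$, whose pairs $e_kc_k$ and $e_{k-1}c_k$ occur in no other edge, so that $e_{k-1}e_k$ is the only shared pair.

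Second, the claim that a tight path ``has an obvious vanishing ordering $x_0<x_1<\cdots<x_k$ (left pairs $x_ix_{i+1}$, right pairs $x_{i+1}x_{i+2}$, top pairs $x_ix_{i+2}$)'' is incorrect: under that ordering the pair $x_{i+1}x_{i+2}$ would be the right pair of the edge $x_ix_{i+1}x_{i+2}$ and simultaneously the left pair of the edge $x_{i+1}x_{i+2}x_{i+3}$, while a vanishing ordering assigns one fixed label to each pair. Consequently your guessed global ordering $a,b,c_0,\ldots,c_k,d_0,\ldots,d_k,e_0,\ldots,e_k$ fails already along each chain, and the ``finite check at the seams'' you anticipate does not exist. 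Vanishing orderings of a tight path necessarily group the vertices by index residue modulo $3$, and this is exactly what the paper's construction does: after deleting $e_{k-1}e_kc_k$, it partitions the vertices into sets $A$, $B$, $C$ according to $i\bmod 3$ (slotting $a$ and $b$ into the blocks determined by $k\bmod 3$), and builds the ordering blockwise, with the exceptional vertices $c_k$, $d_k$, $e_{k-1}$ inserted at carefully chosen positions so that the two surviving linking edges and the edges through $a,b$ are also compatible. This modulo-$3$ block structure is the missing idea in your proposal.
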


\begin{figure}
\begin{center}
\epsfbox{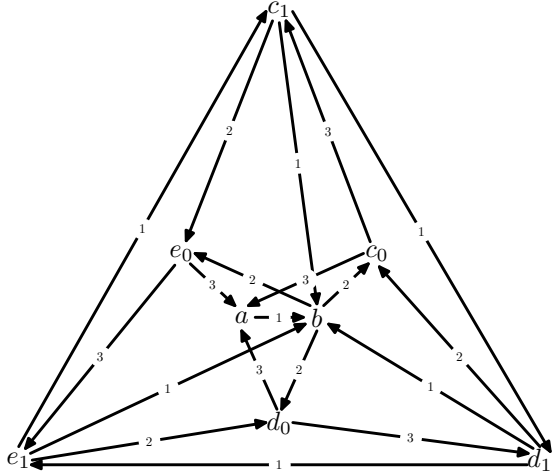}
\end{center}
\caption{The graph $G$ from the proof of Theorem~\ref{thm:example8} for $k=1$.}
\label{fig:example8a}
\end{figure}

\begin{proof}
Fix a positive integer $k$.
We first show using Lemma~\ref{lem:vanishing}, Theorem~\ref{thm:rrs} and Corollary~\ref{cor:rrs} that the uniform Tur\'an density of $H^k$ is at least $1/27$.
Consider the graph $G$ as described in the statement of the lemma.
By symmetry,
we can assume that the edge $ab$ is oriented from $a$ to $b$ and colored with $1$ (see Figure~\ref{fig:example8a} for $k=1$).
So, the edge $bx_0$ is oriented from $b$ to $x_0$ and colored with $2$ for each $x\in\{c,d,e\}$, and
the edge $x_0x_1$ is oriented from $x_0$ to $x_1$ and colored with $3$, etc.
In particular, the edge $x_{k-1}x_k$ is oriented from $x_{k-1}$ to $x_k$ and colored with $k+2\pmod 3$.
It follows that
the edges $c_kd_k$, $d_ke_k$ and $e_kc_k$ form a cyclically oriented triangle and each of the edges is colored with $k\pmod 3$, and
the edges $c_{k-1}c_k$, $c_ke_{k-1}$, $e_{k-1}e_k$, $e_kd_{k-1}$, $d_{k-1}d_k$ and $d_kc_{k-1}$
form an oriented cycle with edges colored with $k+2\pmod 3$ and $k+1\pmod 3$ in an alternating way.
Hence, no pair of edge colors induces an acyclic subgraph, and
so the $3$-graph $H^k$ has no vanishing ordering of the vertices by Lemma~\ref{lem:vanishing}.
We conclude that the uniform Tur\'an density of $H^k$ is at least $1/27$.

We next verify the second and third conditions in the statement of Theorem~\ref{thm:main}.
We set $H_1$ and $H'_1$ to be the $3$-graphs with the same vertex set as $H^k$ and the edge $e_{k-1}e_kc_k$ only, and
$H_2$ and $H'_2$ the $3$-graphs obtained from $H^k$ by removing the edge $e_{k-1}e_kc_k$.
Let $A$ be the set containing all vertices $x_i$ with $i\equiv k-1\pmod 3$, $x\in\{c,d,e\}$,
$a$ if $k\equiv 2\pmod 3$, and $b$ if $k\equiv 0\pmod 3$.
Let $B$ be the set containing all vertices $x_i$ with $i\equiv k\pmod 3$, $x\in\{c,d,e\}$, except for $c_k$ and $d_k$,
$a$ if $k\equiv 1\pmod 3$, and $b$ if $k\equiv 2\pmod 3$.
Finally, let $C$ be the set containing all vertices $x_i$ with $i\equiv k+1\pmod 3$, $x\in\{c,d,e\}$,
$a$ if $k\equiv 0\pmod 3$, and $b$ if $k\equiv 1\pmod 3$.

First consider any ordering of the vertices of $H^k$ that contains first all vertices of $A$ except for $e_{k-1}$,
then $c_k$, then $e_{k-1}$, then $d_k$, then all vertices of $B$, and then all vertices of $C$.
Observe that this ordering is a vanishing ordering with respect to $H'_1$ and
the pair $e_{k-1}e_k$ is left in this ordering.
Indeed, each edge of $H'_1$ except for those containing the vertex $c_k$ or the vertex $d_k$,
i.e., except for $c_{k-2}c_{k-1}c_k$, $d_{k-2}d_{k-1}d_k$, $c_{k-1}c_kd_k$ and $d_{k-1}d_ke_k$,
contains exactly one vertex from $A$, one from $B$ and one from $C$, and
so the pairs involving a vertex from $A$ and a vertex from $B$ except the pair $d_{k-1}e_k$ are left,
the pairs involving a vertex from $A$ and a vertex from $C$ are top, and
the pairs involving a vertex from $B$ and a vertex from $C$ are right;
in addition, the pairs $c_{k-1}c_k$ and $d_{k-1}d_k$ are left, 
the pairs $c_kc_{k-2}$, $d_kd_{k-2}$ and $d_ke_k$ are right, and
the pairs $c_{k-1}d_k$ and $d_{k-1}e_k$ are top,
which is in line with the ordering of the four exceptional edges.
Since $H_1$ contains a single edge (the edge $e_{k-1}e_kc_k$),
the ordering is also a vanishing ordering with respect to $H_1$. 
Furthermore, $e_{k-1}e_k$ is the only pair shared by $H_1$ and $H'_1$ and
the pair right with respect to this ordering in $H_1$ and left in $H_1'$.
Hence, the second condition in the statement of Theorem~\ref{thm:main} holds.

Next consider any ordering of the vertices of $H^k$ that contains first all vertices of $A$,
then $c_k$, then $d_k$, then all vertices of $B$, and then all vertices of $C$.
Observe that this ordering is a vanishing ordering with respect to $H'_2$ and
the pair $e_{k-1}e_k$ is left in this ordering (the argument is analogous to the previous case).
Since $H_2$ contains a single edge, the ordering is also a vanishing ordering with respect to $H_2$ and
the pair $e_{k-1}e_k$ is top.
We conclude that the third condition in the statement of Theorem~\ref{thm:main} also holds, and
so the uniform Tur\'an density of $H$ is equal to $1/27$.
\end{proof}

\section{Conclusion}

The $7$-vertex $3$-graph with uniform Tur\'an density $1/27$ described in Theorem~\ref{thm:example9}
has the smallest possible number of vertices
but it is not the unique $7$-vertex $3$-graph with uniform Tur\'an density equal to $1/27$.
Using a computer, we have generated all minimal $7$-vertex $3$-graphs
with uniform Tur\'an density equal to $1/27$ and
we include their list below (three of them have one fewer edge than the $3$-graph from Theorem~\ref{thm:example9},
however, they enjoy less symmetries than the presented $3$-graph and
so we preferred analyzing a more symmetric $3$-graph with a larger number of edges).
The vertices are denoted by $a,\ldots,g$ and each line below is the edge set of one of them;
the first line contains the $3$-graph described in Theorem~\ref{thm:example9} (with vertices renamed).

\begin{align*}
& abc, abd, abe, acf, acg, bdf, bdg, cef, deg\\
& abc, abd, abe, acf, acg, bdf, cdg, cef, efg\\
& abc, abd, abe, acf, adg, bdf, cef, efg\\
& abc, abd, abe, acf, aeg, bdf, bfg, cde, cdg, cef\\
& abc, abd, abe, acf, bcg, bdf, cde, ceg, efg\\
& abc, abd, ace, adg, bcf, bde, bfg, cdf, ceg\\
& abc, abd, ace, aef, afg, bcf, bde, beg, cdf, cdg\\
& abc, abd, ace, afg, bcf, bde, bfg, def\\
& abc, abd, ace, bde, bfg, cdf, ceg, cfg
\end{align*}

For each of the remaining $15$ minimal $7$-vertex $3$-graphs $H$ with positive uniform Tur\'an density (out of which $6$ have isolated vertices),
for every $\varepsilon>0$,
there exist arbitrarily large $(4/27,\varepsilon)$-dense $3$-graphs that avoid $H$.
Each of these $3$-graphs $H$ is avoided by one of the following two constructions of random $n$-vertex $3$-graphs.
Order the $n$ vertices randomly and color the pairs of vertices randomly red and blue with probability $2/3$ and $1/3$, respectively.
In the first construction, we include an edge if the left and right pairs are red and the top pair is blue, and
in the second construction, we include an edge if the left and top pairs are red and the right pair is blue.

Theorem~\ref{thm:main} gives a sufficient condition on a $3$-graph to have the uniform Tur\'an density equal to $1/27$.
We believe that this condition is not necessary, however,
we do not have an example of a $3$-graph with uniform Tur\'an density $1/27$ that does not satisfy the condition and
do not also have a conjecture for a possible classification of $3$-graphs with uniform Tur\'an density $1/27$.

\begin{problem}
Characterize the $3$-graphs with uniform Tur\'an density equal to $1/27$.
\end{problem}

In view of Corollary~\ref{cor:rrs},
it is natural to ask whether a similar phenomenon appears for the uniform Tur\'an density of $1/27$,
in particular, all $3$-graphs that we know to fail to satisfy the conditions of Theorems~\ref{thm:rrs} and~\ref{thm:main}
have uniform uniform Tur\'an density at least $4/27$.

\begin{problem}
Does there exist $\delta>0$ such that the uniform Tur\'an density of every $3$-graph is either at most $1/27$ or at least $1/27+\delta$?
\end{problem}

\section*{Acknowledgement}

The authors would like to thank Jacob Cooper for his comments on the topics covered in this paper. We would also like to thank the anonymous referee for their careful review of the manuscript and for their many helpful comments.

\bibliographystyle{bibstyle}
\bibliography{turan27}

\end{document}